\newtheorem{Theorem}{Theorem}[section]
\newtheorem{Proposition}[Theorem]{Proposition}
\newtheorem{Lemma}[Theorem]{Lemma}
\newtheorem{Remark}[Theorem]{Remark}
\theoremstyle{definition}
\newtheorem{Definition}{Definition}[section]
\numberwithin{equation}{section}
\title{\textsf{Some properties of generalized reduced Verma modules over $\mathbb{Z}$-graded modular Lie
superalgebras}}
\author{\textsc{Keli Zheng$^{1,2}$} \;\;  \textsc{Yongzheng Zhang$^{2}$}\thanks{Corresponding author.}
 \\
  \\
  \textit{$^1$Department of Mathematics, Northeast Forestry University}\\
  \textit{Harbin 150040, P.R. China}\\
  \textit{$^2$School of Mathematics and Statistics},
  \textit{Northeast Normal University}\\
  \textit{Changchun 130024, P.R. China.}
  }
\date{ }
\begin{document}
\maketitle
\begin{abstract}
This paper is primarily concerned with generalized reduced Verma
modules over $\mathbb{Z}$-graded modular Lie superalgebras. Some
properties of the generalized reduced Verma modules and the
coinduced modules are obtained. Moreover, the invariant forms on the
generalized reduced Verma modules are considered. In particular, we
prove that the generalized reduced Verma module is isomorphic to the
mixed product for modules of $\mathbb{Z}$-graded modular Lie
superalgebras of Cartan type.
\end{abstract}
\textbf{Keywords:} Modular Lie superalgebra, generalized reduced
Verma module, coinduced module, invariant form, mixed product\\
\textbf{2000 Mathematics Subject Classification:} 17B50, 17B10,
17B70

\footnote{E-mail addresses: zhengkl561@nenu.edu.cn (K. Zheng),
  zhyz@nenu.edu.cn (Y. Zhang).}
\section{Introduction}
\label{} As is well known, the representation theory plays an
important role in the research of Lie algebras and Lie superalgebras
(see \cite{ross,H,1,M} for examples). The question about the
structure of submodules of a Verma module arose in the original
paper of Verma \cite{Verma}. As a natural generalization of Verma
modules, the generalized Verma modules are modules induced, starting
from arbitrary simple modules (not necessarily finite-dimensional),
from a parabolic subalgebra and a complex semisimple Lie algebra
(see \cite{V,V1,xin,su2}). One of the main questions about
generalized Verma modules is their structure, i.e., reducibility,
submodules, equivalence, etc. The theory of generalized Verma
modules is rather similar to that of Verma modules. Some results of
Verma modules (see \cite{BGG,D}) were extended to certain class of
generalized Verma modules in \cite{R,FM,MS1,KM1,MO} (see also
references therein). But only rather particular classes of
generalized Verma modules were covered and the problem of how to say
something in a general case remains open.

The generalized reduced Verma module over modular Lie algebras was
constructed in \cite{Farnsteiner}. Some properties of generalized
reduced Verma module over modular Lie algebras were obtained (see
\cite{Farnsteiner, Farnsteiner1,qiusen}). Since generalized reduced
Verma modules are closely related to mixed products of modules, the
structure of mixed products seems to be important and interesting.
In \cite{shen1,shen2,shen3}, Shen classified the $\mathbb{Z}$-graded
irreducible representation of the $\mathbb{Z}$-graded Lie algebras
of Cartan type. His approach rests on the notion of the mixed
product. In \cite{qiusen} the graded modules of graded Cartan type
Lie algebras which possess nondegenerate invariant form were
determined by Chiu. In the case of modular Lie superalgebras of
Cartan type, $\mathbb{Z}$-graded modules of the $\mathbb{Z}$-graded
Lie superalgebras $W(n)$, $S(n)$ and $H(n)$, mixed products of
modules of infinite-dimensional Lie superalgebras and
$\mathbb{Z}$-graded modules of finite-dimensional Hamiltonian Lie
superalgebras were obtained in \cite{zhang1,zhang2,zhang3,zhang4},
respectively.

The aim of this paper is to partially generalize some beautiful
results about generalized reduced Verma modules over modular Lie
algebras in \cite{Farnsteiner, Farnsteiner1,qiusen}. In Section 2,
we review some necessary notions. In Section 3, some relations
between generalized reduced Verma modules and coinduced modules are
given. In Section 4, the invariant forms on generalized reduced
Verma modules are considered. In Section 5, we prove that
generalized reduced Verma modules are isomorphic to mixed products
for modules of $\mathbb{Z}$-graded modular Lie superalgebras of
Cartan type.

All Lie superalgebras and modules treated in the present paper are
assumed to be finite dimensional. In \cite{M,zhang} the reader could
find all notations and notions of Lie superalgebras and modular
representations which are not precisely defined in this paper.

\section{Preliminaries}\label{}

Throughout this paper we will assume that $\mathbb{F}$ is a field of
prime characteristic and $\mathbb{Z}_{2}=\{\bar{0},\bar{1}\}$ is the
residue class ring mod $2$. Let $L=L_{\bar{0}}\oplus L_{\bar{1}}$ be
a Lie superalgebra over $\mathbb{F}$. Then $\mathbb{F}$ has a
trivial structure of a $\mathbb{Z}_{2}$-graded $L$-module:
$\mathbb{F}_{\bar{0}}=\mathbb{F}$, $\mathbb{F}_{\bar{1}}=0$.
Furthermore, we always assumed that the representation of $L$ in
$\mathbb{F}$ is equal to zero.

In addition to the standard notation $\mathbb{Z}$, we write
$\mathbb{N}$ and $\mathbb{N}_{0}$ for the set of positive integers
and the set of nonnegative integers, respectively. Denote by
$\mathbb{N}_{0}^{k}$ the $k$-tuples with nonnegative integers as
entries. For any Lie superalgebra $L$ over $\mathbb{F}$, let $U(L)$
denote the universal enveloping algebra of $L$. If $L=\oplus_{i\in
\mathbb{Z}}L_{i}$ is a $\mathbb{Z}$-graded Lie superalgebra over
$\mathbb{F}$, we customarily put $L^{+}=\oplus_{i> 0}L_{i}$ and
$L^{-}=\oplus_{i< 0}L_{i}$. Then $L=L^{+}\oplus L_{0}\oplus L^{-}$
and $U(L)=U(L^{+})U(L_{0})U(L^{-})$.

Without being mentioned explicitly, if $d(x)$ ($zd(x)$) occurs in
some expression in this paper, we always regard $x$ as a
$\mathbb{Z}_{2}$-homogeneous ($\mathbb{Z}$-homogeneous) element and
$d(x)$ ($zd(x)$) as the $\mathbb{Z}_{2}$-degree
($\mathbb{Z}$-degree) of $x$.

\begin{Definition}[\cite{WZ}]
Let $V$ and $W$ be $L$-modules and suppose that $f$ is a $\mathbb{Z}_{2}$-homogeneous element of
$\mathrm{Hom}_{\mathbb{F}}(V,W)$. The mapping $f$ is called a
\emph{homomorphism} of $L$-modules if $(x\cdot
f)(v)=(-1)^{d(x)d(f)}f(x\cdot v)$ for all $x\in L$ and $v\in V$. The
mapping $f$ is said to be an \emph{isomorphism} of $L$-modules if
$f$ is an homomorphism and if, furthermore, $f$ is a bijection.
\end{Definition}

Let $V$ be an $L$-module. The vector space
$V^{*}:=\mathrm{Hom}_{\mathbb{F}}(V,\mathbb{F})$ obtains the
structure of an $L$-module by means of $(x\cdot
f)(v)=-(-1)^{d(x)d(f)}f(x\cdot v)$, where $x\in L$, $v\in V$, $f\in
V^{*}$. Clearly, $d(x\cdot f)=d(x)+d(f)$.

We consider the subalgebra $K:=L_{0}\oplus L^{+}$ of a
$\mathbb{Z}$-graded Lie superalgebra $L=\oplus_{i\in
\mathbb{Z}}L_{i}$. Let $\{e_{1},\ldots,e_{k}\}$ be a basis of
$L^{-}\cap L_{\bar{0}}$ and $\{\xi_{1},\ldots,\xi_{l}\}$ be a basis
of $L^{-}\cap L_{\bar{1}}$. As $L^{-}\cap L_{\bar{0}}$ operates on
$L$ by nilpotent transformation, there exist $m_{i}\in
\mathbb{N}_{0}$, $1\leq i\leq k$ such that
\[
z_{i}:=e_{i}^{p^{m_{i}}}\in U(L^{-})\cap Z(U(L)), \quad 1\leq i\leq
k,
\]
where $Z(U(L))$ is the center of $U(L)$. In particular, $\{z_{i}\mid
1\leq i\leq k \}$ are homogeneous elements relative to the
$\mathbb{Z}$-gradation inherited by $U(L_{\bar{0}})$. An application
of P-B-W Theorem (see \cite{ross}) reveals that the subalgebra
$\theta(L,K)$ of $U(L)$, which is generated by $K$ and
$\{z_{1},\ldots,z_{k}\}$, is isomorphic to
$\mathbb{F}[z_{1},\ldots,z_{k}]\otimes_{\mathbb{F}}U(K)$, where
$\mathbb{F}[z_{1},\ldots,z_{k}]$ is a polynomial ring in $k$
indeterminates. Then an easy computation shows that $\theta(L,K)$ is
a $\mathbb{Z}$-graded subalgebra of $U(L)$.

Given $\alpha=(\alpha_{1},\ldots,\alpha_{k})\in\mathbb{N}_0^{k}$, we
put $|\alpha|:=\sum_{i=1}^{m}\alpha_{i}$,
$e^{\alpha}:=e_{1}^{\alpha_{1}}e_{2}^{\alpha_{2}}\cdots
e_{k}^{\alpha_{k}}$ and
$\pi:=(\pi_{1},\ldots,\pi_{k})=(p^{m_{1}}-1,\ldots,p^{m_{k}}-1)$.
Set
\[
\mathbb{B}_{s}:=\left\{\langle i_{1},i_{2},\ldots,i_{s}\rangle\mid
1\leq i_{1}< i_{2}<\cdots< i_{s}\leq l\right\}
\]
and $\mathbb{B}:=\bigcup_{s=0}^{l}\mathbb{B}_{s}$, where
$\mathbb{B}_{0}:=\emptyset$ and $l\in \mathbb{N}$. For $u=\langle
i_{1},i_{2},\ldots,i_{s}\rangle\in \mathbb{B}_{s}$, set $|u|:=s$,
$|\emptyset|:=0$, $\xi^{\emptyset}:=1$,
$\xi^{u}:=\xi_{i_{1}}\xi_{i_{2}}\cdots \xi_{i_{s}}$ and
$\xi^{E}:=\xi_{1}\xi_{2}\cdots \xi_{l}$, $u$ is also used to stand
for the index set $\{ i_{1},i_{2},\ldots,i_{s}\}$. It is easy to
show that $U(L)$ is a $\mathbb{Z}$-graded $\theta(L,K)$-module with
the basis
\[
\{e^{\alpha}\xi^{u}\mid 0\leq \alpha\leq \pi, u\in \mathbb{B}\}.
\]

Any $K$-module $V$ obtains the structure of a $\theta(L,K)$-module
by letting $\mathbb{F}[z_{1},\ldots,z_{k}]$ act via its canonical
supplementation which sends $z_{i}$ to $0$. Henceforth, $K$-module will be regarded as
$\theta(L,K)$-module in this fashion. Let $\rho$ be the natural
representation of $K$ in $L/K$. Then there exists a unique
homomorphism $\sigma:U(K)\rightarrow \mathbb{F}$ of
$\mathbb{F}$-superalgebra such that
$\sigma(x)=\mathrm{str}(\rho(x))$, where $x$ is an arbitrary element
of $K$ and $\mathrm{str}(\rho(x))$ is the supertrace of $\rho(x)$
(see \cite{1,M}). We introduce a twisted action on $K$-module $V$ by
setting
\[
x\circ v=x\cdot v+\sigma(x)v, \quad x\in K, \quad v\in V.
\]
Note that $\sigma(x)=0$ for $x\in K_{\bar{1}}$, then
\begin{eqnarray*}
[x,y]\circ v&=&[x,y]\cdot v+\sigma([x,y])v\\
&=&x\cdot (y\cdot v)-(-1)^{d(x)d(y)}y\cdot (x\cdot v)+\sigma(x)\sigma(y)v-(-1)^{d(x)d(y)}\sigma(y)\sigma(x)v\\
&=&x\cdot (y\cdot v)+\sigma(y)x\cdot v+\sigma(x)y\cdot v+\sigma(x)\sigma(y)v\\
& &-(-1)^{d(x)d(y)}y\cdot (x\cdot v)-(-1)^{d(x)d(y)}\sigma(y)x\cdot v\\
& &-(-1)^{d(x)d(y)}\sigma(x)y\cdot v-(-1)^{d(x)d(y)}\sigma(y)\sigma(x)v\\
&=&x\cdot(y\circ v)+\sigma(y)(x\circ v)-(-1)^{d(x)d(y)}y\cdot(x\circ v)-(-1)^{d(x)d(y)}\sigma(y)(x\circ v)\\
&=&x\circ (y\circ v)-(-1)^{d(x)d(y)}y\circ (x\circ v),
\end{eqnarray*}
i.e., $V$ is a new $K$-module by the twisted action.
The new $K$-module will be denoted by $V_{\sigma}$.
If $V$ is an $L_{0}$-module, then we can extend the operations on $V$ to $K$ by
letting $L^{+}$ act trivially and regard it as a $K$-module.

\section{Generalized reduced Verma modules and coinduced modules}
Let $L$ be a $\mathbb{Z}$-graded Lie superalgebra over $\mathbb{F}$
and $V$ be a $K$-module. Following \cite{Farnsteiner}, we give the
definition as follow.

\begin{Definition}
The induced module $\mathrm{Ind}_{K}(V):=U(L)\otimes_{\theta(L,K)}V$
is called a \emph{generalized reduced Verma module}. The coinduced
module $\mathrm{Hom}_{\theta(L,K)}(U(L),V)$ will be denoted by
$\mathrm{Coind}_{K}(V)$.
\end{Definition}

It is clear from the above construction that the modules
$\mathrm{Ind}_{K}(V)$ and $\mathrm{Coind}_{K}(V)$ are annihilated by
$z_{i}$.

Consider $\mathrm{Coind}_{K}(V)$ with $U(L)$-action given via
\[
(y\cdot f)(x):=(-1)^{d(y)(d(f)+d(x))}f(xy), \quad x,y\in U(L).
\]
For $v\in V$, 
$0\leq\beta \leq \pi$ and
$u,t\in \mathbb{B}$, let $\chi_{v}^{(\beta,t)}$ be the element of
$\mathrm{Coind}_{K}(V)$ which sends $e^{\alpha}\xi^{u}$ onto
$(-1)^{d(\chi_{v}^{(\beta,t)})d(\xi^{u})}\delta(\alpha,\beta)\delta(u,t)v$
, where $\delta(i,j)$ is Kronecker delta, defined by $\delta(i,j)=1$
if $i=j$ and $\delta(i,j)=0$ otherwise. It obviously suffices to
verify that
\begin{equation}
\chi_{v}^{(\beta,t)}(e^{\beta}\xi^{t}\vartheta)=(-1)^{d(\vartheta)(
d(\chi_{v}^{(\beta,t)})+d(\xi^{t}))+d(\chi_{v}^{(\beta,t)})d(\xi^{t})}\vartheta\circ v\label{eq1}
\end{equation}
and $d(\chi_{v}^{(\beta,t)})=d(\xi^{t})+d(v)$, for all $\vartheta\in
\theta(L,K)$, for all $v\in V_{\sigma}$.

\begin{Lemma}\label{lm1}
There is a natural isomorphism of functors
\[\Phi:\mathrm{Ind}_{K}(V_{\sigma})\rightarrow \mathrm{Coind}_{K}(V)\]
such that $\Phi(y\otimes v)=(-1)^{d(y)d(\Phi)}y\cdot
\chi_{v}^{(\pi,E)}$, where $y\in U(L)$ and $v\in V_{\sigma}$.
\end{Lemma}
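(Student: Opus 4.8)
The plan is to construct $\Phi$ explicitly on the generating tensors, verify it is a well-defined $U(L)$-module homomorphism, and then show it is bijective by a dimension/basis count. First I would define $\Phi$ by $\Phi(y\otimes v)=(-1)^{d(y)d(\Phi)}y\cdot\chi_v^{(\pi,E)}$ on simple tensors and check it descends to the tensor product over $\theta(L,K)$: for $\vartheta\in\theta(L,K)$, $y\in U(L)$ and $v\in V_\sigma$ one must see that $\Phi(y\vartheta\otimes v)=\Phi(y\otimes\vartheta\circ v)$, which amounts to the identity $(y\vartheta)\cdot\chi_v^{(\pi,E)}=y\cdot(\vartheta\cdot\chi_v^{(\pi,E)})$ together with $\vartheta\cdot\chi_v^{(\pi,E)}=\chi_{\vartheta\circ v}^{(\pi,E)}$ up to the expected sign. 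The first is just associativity of the module action; the second is the content of the displayed equation~\eqref{eq1}, specialized to $\beta=\pi$, $t=E$, which is why that equation was singled out before the lemma. I would then check $\Phi$ respects the $U(L)$-action, i.e. $\Phi(x\cdot(y\otimes v))=(-1)^{d(x)d(\Phi)}x\cdot\Phi(y\otimes v)$ for $x\in U(L)$, which is immediate from the definition once the sign conventions (Definition from \cite{WZ}) are tracked carefully, since $x\cdot(y\otimes v)=xy\otimes v$ on $\mathrm{Ind}_K(V_\sigma)$.

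Next I would establish bijectivity. The key structural fact is that $U(L)$ is a free $\theta(L,K)$-module with basis $\{e^\alpha\xi^u\mid 0\leq\alpha\leq\pi,\ u\in\mathbb{B}\}$, recorded in Section~2. Hence every element of $\mathrm{Ind}_K(V_\sigma)$ is uniquely $\sum_{\alpha,u}e^\alpha\xi^u\otimes v_{\alpha,u}$ with $v_{\alpha,u}\in V_\sigma$, so $\mathrm{Ind}_K(V_\sigma)$ has dimension $p^{m_1+\cdots+m_k}2^l\dim V$ over $\mathbb{F}$; the same count applies to $\mathrm{Coind}_K(V)=\mathrm{Hom}_{\theta(L,K)}(U(L),V)$ because a $\theta(L,K)$-linear map out of a free module is determined by its values on a basis, giving the identical dimension. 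So it suffices to prove $\Phi$ injective (or surjective). For injectivity I would compute $\Phi(e^\alpha\xi^u\otimes v)$ as an element of $\mathrm{Coind}_K(V)$ and evaluate it on the basis vectors $e^\beta\xi^t$ of $U(L)$: the claim is that $e^\alpha\xi^u\cdot\chi_v^{(\pi,E)}$, when evaluated at $e^\beta\xi^t$, is (up to an invertible scalar and a sign) $\delta$-supported on the ``complementary'' pair so that the family $\{\Phi(e^\alpha\xi^u\otimes v)\}$ is triangular with respect to the two dual bases; this forces a nonzero element of $\mathrm{Ind}_K(V_\sigma)$ to a nonzero element of $\mathrm{Coind}_K(V)$.

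The main obstacle I expect is precisely this last evaluation bookkeeping: one has to move $e^\alpha\xi^u$ past $e^\beta\xi^t$ inside $U(L)$ and reduce modulo the ideal generated by the $z_i=e_i^{p^{m_i}}$ (i.e. work in $U(L)/\sum z_iU(L)$, since $\chi_v^{(\pi,E)}$ kills everything not of the top multidegree $(\pi,E)$), keeping track of both the Koszul-type signs from the $\mathbb{Z}_2$-grading and the combinatorial signs from reordering the anticommuting $\xi_j$. The odd part is the delicate piece: $\xi^u\xi^t$ vanishes unless $u$ and $t$ are disjoint, and then equals $\pm\xi^{u\cup t}$, so the pairing $\chi_v^{(\pi,E)}(e^\alpha\xi^u\cdot e^\beta\xi^t)$ is nonzero only when $\alpha+\beta=\pi$ and $u\sqcup t=E$, yielding the desired nondegenerate pairing after one checks the resulting scalar $\binom{\pi}{\alpha}$-type coefficient is a unit in $\mathbb{F}$ (which holds since $0\le\alpha\le\pi=(p^{m_i}-1)_i$, so no binomial coefficient is divisible by $p$). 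Naturality in $V$ is then a routine diagram chase: for any $K$-module map $g\colon V\to V'$ the induced maps on $\mathrm{Ind}$ and $\mathrm{Coind}$ commute with $\Phi$ because $\Phi$ is built from the universal element $\chi_{(\cdot)}^{(\pi,E)}$, which is manifestly functorial in the coefficient module.
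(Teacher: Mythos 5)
Your proposal is correct and follows essentially the same route as the paper: descend the map via $\theta(L,K)$-balancedness using equation~\eqref{eq1}, check $U(L)$-linearity, and prove injectivity by evaluating $e^{\alpha}\xi^{u}\cdot\chi_{v}^{(\pi,E)}$ at the complementary basis element $e^{\beta}\xi^{t}$ with $\alpha+\beta=\pi$, $u\sqcup t=E$. The only cosmetic difference is that you obtain surjectivity from the freeness-based dimension count, whereas the paper exhibits an explicit preimage of each $f$ via the expansion $f=\sum_{\alpha,u}(-1)^{d(f)d(\xi^{u})}\chi_{f(e^{\alpha}\xi^{u})}^{(\alpha,u)}$; also note that no $\binom{\pi}{\alpha}$-type unit check is actually needed, since in $U(L)$ the leading PBW coefficient of $e^{\beta}e^{\alpha}$ is $1$.
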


\begin{proof}
Suppose that the bilinear mapping $\psi: U(L)\times
V_{\sigma}\rightarrow \mathrm{Hom}_{\mathbb{F}}(U(L),V)$ is defined
by $\psi(y,v)=(-1)^{d(y)d(\psi)}y\cdot \chi_{v}^{(\pi,E)}$. Let
$\vartheta\in \theta(L,K)$ and $u'\in U(L)$. Then the equation
(\ref{eq1}) and $d(\chi_{v}^{(\pi,E)})=d(\psi)+d(v)$ imply that
\begin{eqnarray*}
\psi(y\vartheta,v)(u')&=&(-1)^{(d(y)+d(\vartheta))d(\psi)}y\vartheta\cdot \chi_{v}^{(\pi,E)}(u')\\
&=&(-1)^{(d(y)+d(\vartheta))(d(v)+d(u'))}\chi_{v}^{(\pi,E)}(u'y\vartheta)\\
&=&(-1)^{d(y)(d(v)+d(\vartheta)+d(u'))+d(\vartheta)d(\psi)+(d(\psi)+d(v))(d(u')+d(y))}\vartheta\circ v\\
&=&(-1)^{d(y)(d(v)+d(\vartheta)+d(u'))+(d(\vartheta)+d(\psi)+d(v))(d(u')+d(y))}\vartheta\circ v\\
&=&(-1)^{d(y)(d(v)+d(\vartheta)+d(u'))}\chi_{\vartheta\circ v}^{(\pi,E)}(u'y)\\
&=&(-1)^{d(y)d(\psi)}y\cdot \chi_{\vartheta\circ v}^{(\pi,E)}(u')\\
&=&\psi(y,\vartheta\circ v)(u').
\end{eqnarray*}
Consequently, $\psi$ is $\theta(L,K)$-balanced, and induces a
mapping
\[\Phi:U(L)\otimes_{\theta(L,K)} V_{\sigma}\rightarrow \mathrm{Hom}_{\mathbb{F}}(U(L),V).\]
The verification of the inclusion $\mathrm{im}\psi\subseteq
\mathrm{Hom}_{\theta(L,K)}(U(L),V)$ is routine.

For any $x,y\in U(L)$ and $v\in V_{\sigma}$, we have
\[
(x\cdot \Phi)(y\otimes v)=(-1)^{d(y)d(\Phi)}((xy)\cdot \chi_{v}^{(\pi,E)})=(-1)^{d(x)d(\Phi)}\Phi(x\cdot(y\otimes v)).
\]
Hence $\Phi$ is a homomorphism of $U(L)$-modules.

For any $f\in \mathrm{Coind}_{K}(V)$, there exists
$e^{\alpha}\xi^{u}\in U(L)$ such that
\[f=\sum_{\alpha,u}(-1)^{d(f)d(\xi^{u})}\chi_{f(e^{\alpha}\xi^{u})}^{(\alpha,u)},\]
where $0\leq \alpha\leq \pi$ and $u\in \mathbb{B}$. Then
$\Phi(\sum_{\alpha,u}(-1)^{d(f)d(\xi^{u})}y\otimes
f(e^{\alpha}\xi^{u}))=f$, i.e., $\Phi$ is a surjection.

Suppose that $0=y\cdot X_{v}^{(\pi,E)}\in \mathrm{Coind}_{K}(V)$ and
$y=e^{\alpha}\xi^{u}\in U(L)$, then there exists
$u'=e^{\beta}\xi^{t}\in U(L)$ such that $\alpha+\beta=\pi$ and
$u+t=E$. It follows that \[0=y\cdot
\chi_{v}^{(\pi,E)}(u')=(-1)^{d(y)(d(u')+d(\chi_{v}^{(\pi,E)}))+d(\chi_{v}^{(\pi,E)})(d(u')+d(y))}v.\]
Therefore, $y\otimes v=0$, i.e., $\Phi$ is an injection.

Now we show that $\Phi$ is a natural homomorphism. Suppose that $W$ is a $K$-module and $\varphi:V\rightarrow W$ is a homomorphism of $K$-module.
Clearly, $\varphi$ is also a homomorphism between $V_{\sigma}$ and $W_{\sigma}$. We claim that the following diagram is commutative.
\[ \begin{CD}
  \mathrm{Ind}_{K}(V_{\sigma}) @>\Phi>> \mathrm{Coind}_{K}(V) \\
  @V\mathrm{id}\otimes \varphi VV                       @VV\varphi^{*}V  \\
  \mathrm{Ind}_{K}(W_{\sigma}) @>\Phi'>> \mathrm{Coind}_{K}(W)
\end{CD} \]
Note that $\varphi^{*}$ and $\mathrm{id}\otimes \varphi$ are homomorphisms of $U(L)$-modules,
the assertion follows from the ensuing calculation:
\[
\varphi^{*}\circ \Phi(1\otimes
v)(u')=\chi_{\varphi(v)}^{(\pi,E)}(u')=(\Phi'\circ(\mathrm{id}\otimes
\varphi))(1\otimes v)(u'),\quad u'\in U(L).
\]
In conclusion, the proof is completed.
\end{proof}

\begin{Remark}
\begin{enumerate}
\item[(1)] If the above result is applied to the module $V_{-\sigma}$, then we obtain
natural isomorphisms
$\mathrm{Ind}_{K}(V)\cong\mathrm{Coind}_{K}(V_{-\sigma})$.
\item[(2)] Suppose that $K$ acts nilpotently on $L/K$ or $(\rho(K))^{(1)}=\rho(K)$. Then $\sigma=0$ and every $K$-module $V$ gives an isomorphism
$\mathrm{Ind}_{K}(V)\cong\mathrm{Coind}_{K}(V)$.
\end{enumerate}
\end{Remark}

Following \cite{shen2}, we refer to a $\mathbb{Z}$-graded $L$-module
$V$ as positively graded if $V=\bigoplus\limits_{i\geq 0}V_{i}$ and
$L_{j}\cdot V_{i}\subseteq V_{i+j}$. A positively graded module $V$
is said to be transitive if $V_{0}=\{v\in V\mid x\cdot v=0$, for all
$x\in L^{-}\}$.

\begin{Proposition}\label{p1}
Let $P=\mathrm{Coind}_{K}(V)$ be an $L$-module and \[P_{i}:=\{f\in
P\mid f(U(L)_{j})=0, j\neq -i\}.\] Then
\begin{enumerate}
\item[(1)] $P$ is a positively graded $L$-module.
\item[(2)] $P_{0}$ is isomorphism to $V$ as an $L_{0}$-module.
\item[(3)] $P$ is transitively graded.
\end{enumerate}
\end{Proposition}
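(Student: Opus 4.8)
The plan is to extract all three statements from two facts: that $\{\chi_v^{(\beta,t)}\}$ is a basis of $P=\mathrm{Coind}_K(V)$ (read off from the proof of Lemma~\ref{lm1}), and that $U(L)=\bigoplus_{0\le\alpha\le\pi,\,u\in\mathbb{B}}\theta(L,K)\,e^{\alpha}\xi^{u}$ is a decomposition of $\mathbb{Z}$-graded left $\theta(L,K)$-modules. The one auxiliary lemma I would prove first is that $\theta(L,K)_{m}$ annihilates $V$ whenever $m\ne 0$: using $\theta(L,K)\cong\mathbb{F}[z_{1},\dots,z_{k}]\otimes U(K)$, every homogeneous element of nonzero degree is a sum of elements each either carrying a factor among the central $z_{i}$ (which act as $0$) or, when the degree is positive, lying in $U(K)L^{+}$ (and $L^{+}$ acts as $0$ on $V$, as $V$ is an $L_{0}$-module extended to $K$); the same holds for the $\sigma$-twisted action, since $\sigma$ vanishes on the positive-degree part of $K$, $\mathrm{ad}$ being degree-preserving. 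Granting this, from $U(L)_{j}=\bigoplus_{\alpha,u}\theta(L,K)_{j+d_{(\alpha,u)}}e^{\alpha}\xi^{u}$ with $d_{(\alpha,u)}:=-zd(e^{\alpha}\xi^{u})$, and the fact that $\chi_v^{(\beta,t)}$ kills every summand with $(\alpha,u)\ne(\beta,t)$ while on the $(\beta,t)$-summand it is $\theta(L,K)_{j+d_{(\beta,t)}}$ acting on $v$, one gets $\chi_v^{(\beta,t)}\in P_{d_{(\beta,t)}}$, and $d_{(\beta,t)}\ge 0$ because each $e_{i},\xi_{j}$ lies in $L^{-}$. Since $\{\chi_v^{(\beta,t)}\}$ spans $P$ and the $P_{i}$ meet in $0$, this gives $P=\bigoplus_{i\ge 0}P_{i}$. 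The inclusion $L_{j}\cdot P_{i}\subseteq P_{i+j}$ is immediate from the action formula: for $x\in L_{j}$, $f\in P_{i}$, $y\in U(L)_{m}$, $(x\cdot f)(y)=\pm f(yx)$ with $yx\in U(L)_{m+j}$, hence $0$ unless $m=-(i+j)$. This proves (1).

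For (2): the degree computation forces $d_{(\beta,t)}=0$ exactly when $\beta=0$ and $t=\emptyset$ (lowering any $\alpha_{i}$ or deleting any $\xi_{j}$ strictly raises $zd(e^{\alpha}\xi^{u})$), so $P_{0}=\mathbb{F}\text{-span}\{\chi_v^{(0,\emptyset)}\mid v\in V\}$; moreover any $f\in P_{0}$ vanishes on all $e^{\alpha}\xi^{u}$ of negative degree and, being $\theta(L,K)$-linear, is determined by $f(1)\in V$. Hence $v\mapsto\chi_v^{(0,\emptyset)}$ is a linear bijection $V\to P_{0}$ with inverse $f\mapsto f(1)$, and I would check its $L_{0}$-equivariance by computing, for $x\in L_{0}$, $(x\cdot\chi_v^{(0,\emptyset)})(1)=(-1)^{d(x)d(v)}\chi_v^{(0,\emptyset)}(x)$ and reading off $\chi_v^{(0,\emptyset)}(x)$ from \eqref{eq1} (with $\beta=0,\ t=\emptyset$); equivalently, transport the statement along $\mathrm{Coind}_K(V)\cong\mathrm{Ind}_K(V_{\sigma})$ of Lemma~\ref{lm1}, under which $P_{0}$ corresponds to the unique lowest-degree piece $e^{\pi}\xi^{E}\otimes V_{\sigma}$ of $U(L)\otimes_{\theta(L,K)}V_{\sigma}$, and observe that $x\cdot(e^{\pi}\xi^{E}\otimes v)=(xe^{\pi}\xi^{E})\otimes v\equiv e^{\pi}\xi^{E}\otimes(x\cdot v)$ modulo higher-degree terms, the $\sigma$-twist on $V_{\sigma}$ being cancelled exactly by the supertrace $-\sigma(x)$ produced in commuting $x$ to the right of $e^{\pi}\xi^{E}$.

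For (3): by (1), $L^{-}\cdot P_{0}\subseteq\bigoplus_{m>0}P_{-m}=0$, so $P_{0}\subseteq\{f\in P\mid x\cdot f=0\text{ for all }x\in L^{-}\}$. Conversely, if $x\cdot f=0$ for all $x\in L^{-}$, then $f$ vanishes on $U(L)L^{-}$; since $U(L)=U(K)\oplus U(L)L^{-}$ by P-B-W, such an $f$ is determined by $f|_{U(K)}$, hence — as $U(K)\subseteq\theta(L,K)$ and $f$ is $\theta(L,K)$-linear — by the single value $f(1)$. Thus $\dim\{f\in P\mid L^{-}\cdot f=0\}\le\dim V=\dim P_{0}$, and with the inclusion above this gives $P_{0}=\{f\in P\mid L^{-}\cdot f=0\}$; that is, $P$ is transitively graded.

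The step I expect to be the main obstacle is not the grading bookkeeping but the two places where the structure of $\theta(L,K)$ and the $\mathbb{Z}_{2}$-graded signs intervene: the auxiliary annihilation $\theta(L,K)_{m}V=0$ for $m\ne 0$ (which rests on $\theta(L,K)\cong\mathbb{F}[z_{1},\dots,z_{k}]\otimes U(K)$ together with $\bigoplus_{m>0}U(K)_{m}=U(K)L^{+}$), and, in (2), the sign-and-supertrace computation needed to confirm that $P_{0}$ is isomorphic to $V$ rather than to $V_{\sigma}$.
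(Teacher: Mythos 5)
Your argument for parts (1) and (2) is essentially the paper's: the paper likewise writes each $f$ as a combination of the $\chi_{f(e^{\alpha}\xi^{u})}^{(\alpha,u)}$, places $\chi_{v}^{(\alpha,u)}$ in $P_{-zd(e^{\alpha}\xi^{u})}$ using that homogeneous elements of $\theta(L,K)$ of nonzero degree kill $V$ (the annihilation fact you isolate as a lemma is used silently there, and note that both you and the paper are really assuming $L^{+}$ acts trivially on $V$, i.e.\ that $V$ is an $L_{0}$-module extended to $K$), and proves (2) via $\mu(f)=f(1)$. One caution on (2): your first route, reading $\chi_{v}^{(0,\emptyset)}(x)$ off equation \eqref{eq1} with $\beta=0$, $t=\emptyset$, would literally return the twisted value $x\circ v$ and hence identify $P_{0}$ with $V_{\sigma}$; equation \eqref{eq1} is only the right identity at the top monomial $e^{\pi}\xi^{E}$, and for $(0,\emptyset)$ one should instead use left $\theta(L,K)$-linearity, $f(x)=\pm\,x\cdot f(1)$ with the untwisted action, which is exactly what the paper does and what your second (transport along Lemma \ref{lm1}) route recovers. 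In part (3) you genuinely diverge: the paper argues directly that a $\mathbb{Z}$-homogeneous $f\in P_{q}$, $q\neq 0$, annihilated by $L^{-}$ must vanish, by expanding $U(L)_{-q}$ via P-B-W and reducing $f(y)$ to $a_{0}b_{-q}f(1)$ with $b_{-q}\cdot f=0$; you instead observe that $L^{-}\cdot P_{0}\subseteq\bigoplus_{m>0}P_{-m}=0$ gives one inclusion, and that $U(L)=U(K)\oplus U(L)L^{-}$ forces any $f$ killed by $L^{-}$ to be determined by $f(1)$, so a dimension count closes the other. Your version is cleaner and avoids the somewhat delicate manipulations in the paper's computation, at the cost of invoking finite-dimensionality (which the paper assumes globally anyway); both rest on the same P-B-W decomposition, so the difference is one of bookkeeping rather than of substance.
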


\begin{proof}
\begin{enumerate}
\item[(1)] Let $f$ be an element of $P_{i}$  and suppose that $y\in U(L)_{q}$, where $i,q\in \mathbb{Z}$.
If $x\in U(L)_{j}$ for $j\neq -i-q$, then $xy\in U(L)_{j+q}$, where
$j\in \mathbb{Z}$. It follows that \[(y\cdot
f)(x)=(-1)^{d(y)(d(f)+d(x))}f(xy)=0.\] Consequently, $(y\cdot f)$
belongs to $P_{i+q}$.

Let $\{x_{1},\ldots,x_{n}\}$ be the basis of $U(L)$ over
$\theta(L,K)$ and induced by $\{e_{1},\ldots,e_{k}\}$ and
$\{\xi_{1},\ldots,\xi_{l}\}$. In accordance with the basis of
$U(L)$, we may assume that $x_{r}=e^{\alpha}\xi^{u}\in U(L)_{i(r)}$,
where $i(r)\leq 0$ and $1\leq r\leq n$. Any element of $U(L)_{q}$ is
a sum of elements $x=\sum_{r=1}^{n}h_{r}x_{r}$, $h_{r}\in
\theta(L,K)_{q-i(r)}$. Given $r\in \{1,2,\ldots,n\}$, we have
$\chi_{v}^{(\alpha,u)}(x)=(-1)^{(d(x)+d(v))d(x)}h_{r}v$. If $q\neq
i(r)$, then $\chi_{v}^{(\alpha,u)}(x)=0$. It follows that
$\chi_{v}^{(\alpha,u)}$ is an element of $P_{-i(r)}$. For every
$f\in P$, we have
$f=\sum_{\alpha,u}(-1)^{d(f)d(\xi^{u})}\chi_{f(e^{\alpha}\xi^{u})}^{(\alpha,u)}$.
Consequently, $P=\oplus_{r=1}^{n} P_{-i(r)}$ and $P$ is a positively
graded module.

\item[(2)] We proceed by showing that $\mu:P_{0}\rightarrow V$; $\mu(f)=f(1)$ is an isomorphism of $L_{0}$-modules.
If $x\in L_{0}$, then
\[
\mu(x\cdot f)=(x\cdot f)(1)=(-1)^{d(x)d(f)}f(x)=x\cdot f(1)=x\cdot\mu(f),
\]
i.e., $\mu$ is a homomorphism of $L_{0}$-modules.

Since $1:=e^{\alpha}\xi^{u}\in U(L)_{0}$ is contained in
$\{x_{1},\ldots,x_{n}\}$,
$(-1)^{(d(\xi^{u})+d(v))d(\xi^{u})}\chi_{v}^{(\alpha,u)}$ is a
pre-image of $v\in V$ under $\mu$.

Suppose that $f\in \mathrm{ker}\mu$.
Owing to the P-B-W theorem, for every element $x\in U(L)_{0}$, we may assume that
$x=\sum_{i+j=0}a_{i}b_{j}$, where $a_{i}\in U(K)_{i}$ and $b_{j}\in U(L^{-})_{j}$.
Since $a_{i}=0$ for $i<0$ and $a_{i}\in U(L_{0})U(L^{+})$ for $i>0$,
we obtain
\begin{eqnarray*}
f(x)&=&\sum_{i+j=0}(-1)^{d(a_{i})d(f)}a_{i}f(b_{j})=(-1)^{d(a_{0})d(f)}a_{0}f(b_{0})\\
&=&(-1)^{(d(a_{0})+d(b_{0}))d(f)}a_{0}b_{0}f(1)=0.
\end{eqnarray*} As a result $f=0$ on $U(L_{0})$ and thereby on all of
$U(L)$. Therefore, $\mu$ is an isomorphism of $L_{0}$-modules.

\item[(3)]  Suppose that $f$ is an element of $P$ such that $x\cdot f=0$ for every $x\in L^{-}$.
Then each $\mathbb{Z}$-homogeneous constituent of $f$ enjoys the
same property. Thus we may assume $f\in P_{q}$, where $q\in
\mathbb{Z}$. Suppose that $q>0$ and $y$ is an element of
$U(L)_{-q}$. Without loss of generality we may assume that
$y=\sum_{i+j=-q}a_{i}b_{j}$, where $a_{i}\in U(K)_{i}$ and $b_{j}\in
U(L^{-})_{j}$. As $a_{i}\cdot V=0$ for $i>0$, we have
\[
f(y)=\sum_{i+j=-q}(-1)^{d(a_{i})d(f)}a_{i}f(b_{j})=(-1)^{d(a_{0})d(f)}a_{0}f(b_{-q}).
\]
Then it follows that
$f(y)=(-1)^{(d(a_{0})+d(b_{-q}))d(f)}a_{0}b_{-q}f(1)$. Since
$b_{-q}$ belongs to $U(L^{-})$, we obtain $b_{-q}\cdot f=0$. Thus
$f(y)=0$. Similarly, if $q<0$, then $f(y)$ also equals to zero.
Therefore, $f\in P_{0}$.

Conversely, if $f\in P_{0}$, then $f(U(L)_{i})=0$ for $i\neq 0$. For
any $x\in L^{-}$, we have \[ (x\cdot
f)(y)=(-1)^{d(x)(d(f)+d(y))}f(yx)=(-1)^{d(x)(d(y))}y\cdot f(x)=0,
\quad y\in U(L)^{+}
\]
and
\[
(x\cdot f)(y)=(-1)^{d(x)(d(f)+d(y))}f(yx)=0, \quad y\in
U(L)^{-}\oplus U(L)_{0}.
\]
Therefore, $x\cdot f=0$ for all $x\in L^{-}$.
\end{enumerate}
\end{proof}

For $x_{1},\ldots,x_{n}\in L$,
set
\[
(x_{1}\cdots x_{n})^{T}:=(-1)^{n+\sum_{i=1}^{n-1}\sum_{j=i+1}^{n}d(x_{i})d(x_{j})}x_{n}\cdots x_{1}.
\]
It is easy to verify that $x_{i}^{T}=-x_{i}$ and
$d(x_{i}^{T})=d(x_{i})$ for $i\in \{1,\ldots,n\}$. Then the
principal anti-automorphism of $U(L)$ is defined by $x\mapsto
x^{T}$, for all $x\in U(L)$.

In the following proposition, the property of adjoint isomorphism
will be investigated.

\begin{Proposition}\label{p2}
There is a natural isomorphism:
\[\Psi:(\mathrm{Ind}_{K}(V))^{*}\rightarrow \mathrm{Coind}_{K}(V^{*}),\]
namely, for $\varphi\in (\mathrm{Ind}_{K}(V))^{*}$, $x\in U(L)$ and $v\in V$,
\[\Psi:\varphi\mapsto\Psi(\varphi), \mbox{ where }  \Psi(\varphi)(x):v\mapsto \varphi(x^{T}\otimes v).\]
\end{Proposition}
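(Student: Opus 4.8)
The plan is to follow the pattern of the proof of Lemma~\ref{lm1}: write $\Psi$ down explicitly, check that every $\Psi(\varphi)$ genuinely lies in $\mathrm{Coind}_{K}(V^{*})$, that $\Psi$ intertwines the $U(L)$-actions, that it is bijective, and that it is natural in $V$. The one genuine difficulty is keeping the $\mathbb{Z}_{2}$-signs consistent with the conventions already fixed for the contragredient action, for the action on $\mathrm{Coind}_{K}(V^{*})$, and for the principal anti-automorphism $x\mapsto x^{T}$; everything else is formal.

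First I would record two facts that make the formula legitimate. Since $x^{T}=-x$ for $x\in L$ we have $K^{T}=K$ inside $U(L)$, and $z_{i}^{T}=(-1)^{p^{m_{i}}}z_{i}$, so $\theta(L,K)$ is stable under $T$; moreover $z_{i}^{T}$ still annihilates $V$ and $V^{*}$. Consequently, for $\vartheta\in\theta(L,K)$, $x\in U(L)$ and $v\in V$, the anti-automorphism property gives $(\vartheta x)^{T}\otimes v=x^{T}\vartheta^{T}\otimes v=x^{T}\otimes(\vartheta^{T}\cdot v)$ in $\mathrm{Ind}_{K}(V)$. Secondly, one proves by induction on length that the $L$-action on $V^{*}$ extends to $U(L)$ by $(y\cdot g)(v)=(-1)^{d(y)d(g)}g(y^{T}\cdot v)$, starting from the defining sign $-(-1)^{d(x)d(g)}$ for $x\in L$ and using that $T$ is an anti-automorphism. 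Combining these two facts shows that $\Psi(\varphi)$ satisfies the $\theta(L,K)$-linearity condition built into the definition of $\mathrm{Coind}_{K}(V^{*})$, so that $\mathrm{im}\,\Psi\subseteq\mathrm{Coind}_{K}(V^{*})$.

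Next I would verify $U(L)$-linearity by expanding both sides: $\Psi(y\cdot\varphi)(x)(v)$ is unwound through the contragredient action on $(\mathrm{Ind}_{K}(V))^{*}$ and the identity $y^{T}x^{T}=(xy)^{T}$, while $(y\cdot\Psi(\varphi))(x)(v)$ is unwound through the defining action $(y\cdot f)(x)=(-1)^{d(y)(d(f)+d(x))}f(xy)$ on $\mathrm{Coind}_{K}(V^{*})$; matching the two is again a sign check. Bijectivity is then immediate: $U(L)$ is a free $\theta(L,K)$-module of finite rank $|\mathbb{B}|\cdot\prod_{i=1}^{k}p^{m_{i}}$, hence $\dim_{\mathbb{F}}\mathrm{Coind}_{K}(V^{*})=\dim_{\mathbb{F}}\mathrm{Ind}_{K}(V)=\dim_{\mathbb{F}}(\mathrm{Ind}_{K}(V))^{*}$, so it suffices to see that $\Psi$ is injective; and if $\Psi(\varphi)=0$ then $\varphi(x^{T}\otimes v)=0$ for all $x\in U(L)$ and $v\in V$, which forces $\varphi=0$ because $T$ is a bijection of $U(L)$ and the elements $y\otimes v$ span $\mathrm{Ind}_{K}(V)$.

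Finally, naturality is the easiest step: for a homomorphism $\varphi\colon V\to W$ of $K$-modules, tracing an arbitrary $\psi\in(\mathrm{Ind}_{K}(W))^{*}$ around the square with vertical maps $(\mathrm{Ind}_{K}(\varphi))^{*}$ and $\mathrm{Coind}_{K}(\varphi^{*})$ yields $x\mapsto\bigl(v\mapsto\psi(x^{T}\otimes\varphi(v))\bigr)$ either way, so the square commutes on the nose. I expect the main obstacle to be none of the structural points but precisely the sign bookkeeping in the two intertwining identities of the previous two paragraphs — reconciling the sign $-(-1)^{d(x)d(f)}$ in the dual action, the sign in $(x_{1}\cdots x_{n})^{T}$, and the sign $(-1)^{d(y)(d(f)+d(x))}$ in the $\mathrm{Coind}$-action. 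I would therefore fix once and for all the extensions of $x\mapsto x^{T}$ and of the contragredient action to $U(L)$, verify the relevant formulas on products of elements of $L$ by induction on length, and only then assemble the proof.
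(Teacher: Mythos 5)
Your proposal is correct and follows essentially the same route as the paper's proof: define $\Psi$ explicitly, check that its values are $\theta(L,K)$-semilinear (a point the paper dismisses as routine but you rightly reduce to the stability of $\theta(L,K)$ under $T$ and the extension of the contragredient action to $U(L)$), verify $U(L)$-equivariance by sign bookkeeping, establish bijectivity, and check naturality. The only divergences are cosmetic: you get surjectivity from injectivity plus the dimension count $\dim\mathrm{Coind}_{K}(V^{*})=\dim(\mathrm{Ind}_{K}(V))^{*}$ (legitimate, since all modules here are finite dimensional), whereas the paper exhibits the explicit preimage $\varphi(x\otimes v):=f(x^{T})(v)$; and when you carry out the deferred sign check you should use the super form $(xy)^{T}=(-1)^{d(x)d(y)}y^{T}x^{T}$ rather than the unsigned identity you quote.
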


\begin{proof}
Firstly, we prove that $\Psi$ is a homomorphism of $U(L)$-modules.
Let $\varphi_{1}$ and $\varphi_{2}$ are elements of
$(\mathrm{Ind}_{K}(V))^{*}$. Then the definition of
$\varphi_{1}+\varphi_{2}$ shows that
\begin{eqnarray*}
\Psi(\varphi_{1}+\varphi_{2})(x)(v)&=&(\varphi_{1}+\varphi_{2})(x^{T}\otimes v)\\
&=&(\varphi_{1})(x^{T}\otimes v)+(\varphi_{2})(x^{T}\otimes v)\\
&=&\Psi(\varphi_{1})(x)(v)+\Psi(\varphi_{2})(x)(v)\\
&=&(\Psi(\varphi_{1})+\Psi(\varphi_{2}))(x)(v),
\end{eqnarray*}
where $x\in U(L)$ and $v\in V$.
Therefore, $\Psi(\varphi_{1}+\varphi_{2})=\Psi(\varphi_{1})+\Psi(\varphi_{2})$.
For any $x,y\in U(L)$,  $v\in V$ and $\varphi\in (\mathrm{Ind}_{K}(V))^{*}$, we have
\begin{eqnarray*}
y\cdot \Psi(\varphi)(x)(v)&=&(-1)^{d(y)(d(\Psi)+d(\varphi)+d(x))}\Psi(\varphi)(xy)(v)\\
&=&(-1)^{d(y)(d(\Psi)+d(\varphi)+d(x))}\varphi((xy)^{T}\otimes v)\\
&=&(-1)^{d(y)(d(\Psi)+d(\varphi))}\varphi(yx\otimes v)\\
&=&(-1)^{d(y)d(\Psi)}y\cdot\varphi(x^{T}\otimes v)\\
&=&(-1)^{d(y)d(\Psi)}\Psi(y\cdot\varphi)(x)(v).
\end{eqnarray*}
Therefore, $y\cdot \Psi(\varphi)=(-1)^{d(y)d(\Psi)}\Psi(y\cdot\varphi)$.

Next $\Psi$ is injective. In fact, if $\Psi(\varphi)(x)(v)=0$, then
$0=\Psi(\varphi)(x)(v)=\varphi(x^{T}\otimes v)$ for all $x\in U(L)$
and $v\in V$. Thus $\varphi=0$ because it vanishes on every
generator of $\mathrm{Ind}_{K}(V)$.

Now we show that $\Psi$ is surjective.
Let $f\in \mathrm{Coind}_{K}(V^{*})$. Define $\varphi(x\otimes v):=f(x^{T})(v)$ for $x\in U(L)$ and $v\in V$.
Then $\Psi(\varphi)=f$.

It is easy to check that $\Psi$ is a natural homomorphism. In
conclusion, the proof is completed.
\end{proof}

\begin{Remark}
Proposition \ref{p2} is called adjoint isomorphism in homological algebra (see \cite{rotman}).
\end{Remark}

\begin{Theorem}\label{th1}
$\mathrm{Ind}_{K}(V_{\sigma})\cong(\mathrm{Ind}_{K}(V_{\sigma}))^{*}$ if and only if $V\cong (V_{\sigma})^{*}$.
\end{Theorem}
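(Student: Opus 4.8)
The plan is to combine the two natural isomorphisms already available: Lemma~\ref{lm1}, which gives $\mathrm{Ind}_{K}(V_{\sigma})\cong\mathrm{Coind}_{K}(V)$, and Proposition~\ref{p2}, which gives $(\mathrm{Ind}_{K}(W))^{*}\cong\mathrm{Coind}_{K}(W^{*})$ for any $K$-module $W$. The idea is that the dual $(\mathrm{Ind}_{K}(V_{\sigma}))^{*}$ can be rewritten, via Proposition~\ref{p2} applied to $W=V_{\sigma}$, as $\mathrm{Coind}_{K}((V_{\sigma})^{*})$, while $\mathrm{Ind}_{K}(V_{\sigma})$ itself is $\mathrm{Coind}_{K}(V)$ by Lemma~\ref{lm1}. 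So the original statement is equivalent to $\mathrm{Coind}_{K}(V)\cong\mathrm{Coind}_{K}((V_{\sigma})^{*})$, and the goal becomes: this holds if and only if $V\cong(V_{\sigma})^{*}$ as $K$-modules.

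First I would prove the ``if'' direction, which is the easy one: if $V\cong(V_{\sigma})^{*}$ as $K$-modules, then applying the coinduction functor (which is functorial, as the naturality clauses in Lemma~\ref{lm1} and Proposition~\ref{p2} attest) gives $\mathrm{Coind}_{K}(V)\cong\mathrm{Coind}_{K}((V_{\sigma})^{*})$, and chaining with the two isomorphisms above yields $\mathrm{Ind}_{K}(V_{\sigma})\cong(\mathrm{Ind}_{K}(V_{\sigma}))^{*}$. For the ``only if'' direction, I would use Proposition~\ref{p1}: the coinduced module $\mathrm{Coind}_{K}(W)$ is positively graded and transitive, with degree-zero component $\mathrm{Coind}_{K}(W)_{0}\cong W$ as an $L_{0}$-module (Proposition~\ref{p1}(2)). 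Thus an isomorphism $\mathrm{Coind}_{K}(V)\cong\mathrm{Coind}_{K}((V_{\sigma})^{*})$ of $L$-modules must respect the $\mathbb{Z}$-grading (a graded-module isomorphism, or at least one can extract the degree-zero graded piece), hence restricts to an isomorphism of the degree-zero parts, giving $V\cong(V_{\sigma})^{*}$ at least as $L_{0}$-modules. To upgrade this to an isomorphism of $K$-modules, I would invoke transitivity: on a transitive positively graded module the action of $K=L_{0}\oplus L^{+}$ is determined by the $L_{0}$-action on the bottom component together with the graded structure, so the $L_{0}$-isomorphism of bottom components forces a $K$-isomorphism of the original modules. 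Alternatively—and perhaps more cleanly—one observes that $V\mapsto\mathrm{Coind}_{K}(V)$ is faithful on the relevant category because $V$ is recovered as the socle/bottom layer, so an isomorphism downstairs reflects to one upstairs.

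The main obstacle I anticipate is the ``only if'' direction, specifically the passage from "$L$-module isomorphism of the coinduced modules" to "$K$-module isomorphism of $V$ and $(V_{\sigma})^{*}$": one must be careful that an abstract $L$-module isomorphism need not a priori be homogeneous, so I would either argue that it can be chosen homogeneous (using that both modules are $\mathbb{Z}$-graded with finitely many graded pieces and that the bottom piece $P_0$ is intrinsically characterized as $\{v : L^-\cdot v = 0\}$ by transitivity, Proposition~\ref{p1}(3), hence preserved by any isomorphism), or else work directly with the characterization of $P_0$. Once the bottom components are matched as $L_0$-modules, recovering the full $K$-module structure of $V$ — recalling that $V$ was viewed as a $K$-module with $L^+$ acting possibly nontrivially in general, though in the transitive coinduced picture the $K$-structure on $P_0\cong V$ is exactly the original one — should be routine from the construction of $\mathrm{Coind}_K$. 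I would also double-check the bookkeeping of the twist $\sigma$: it enters only through $V_\sigma$ on the induced side and is untwisted on the coinduced side in Lemma~\ref{lm1}, so the asymmetry between $V$ and $(V_\sigma)^*$ in the statement is exactly what the composition of the two isomorphisms produces, and no further twisting identity is needed.
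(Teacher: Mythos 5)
Your proposal is correct and follows essentially the same route as the paper: apply Lemma~\ref{lm1} and Proposition~\ref{p2} to rewrite both sides as $\mathrm{Coind}_{K}(V)$ and $\mathrm{Coind}_{K}((V_{\sigma})^{*})$, then use Proposition~\ref{p1} to identify the degree-zero components. In fact you are more careful than the paper's own (very terse) proof, which simply asserts that the conclusion ``follows from Proposition~\ref{p1}'' without addressing the homogeneity of the isomorphism --- a point you handle correctly via the intrinsic characterization of $P_{0}$ by transitivity.
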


\begin{proof}
If
$\mathrm{Ind}_{K}(V_{\sigma})\cong(\mathrm{Ind}_{K}(V_{\sigma}))^{*}$,
by Lemma \ref{lm1} and Proposition \ref{p2}, then
\begin{eqnarray*}
\mathrm{Coind}_{K}(V)\cong \mathrm{Coind}_{K}((V_{\sigma})^{*}).
\end{eqnarray*}
It follows from Proposition \ref{p1} that $V\cong (V_{\sigma})^{*}$.
The sufficiency is obvious.
\end{proof}

\section{Invariant forms on generalized reduced Verma modules}
\label{} The results of this section generalize Chiu's results in
\cite{qiusen} and determine generalized reduced Verma modules over
modular Lie superalgebras which possess a nondegenerate
super-symmetric or skew super-symmetric invariant bilinear form. Let
$L$ be a Lie superalgebra over $\mathbb{F}$ and $V$ be an
$L$-module. A bilinear form $\lambda:V\times V\rightarrow
\mathbb{F}$ is called super-symmetric (skew super-symmetric) if
$\lambda(v,w)=(-1)^{d(v)d(w)}\lambda(w,v)$
($\lambda(v,w)=-(-1)^{d(v)d(w)}\lambda(w,v)$), for all $v,w\in V$. A
super-symmetric (or skew super-symmetric) bilinear form
$\lambda:V\times V\rightarrow \mathbb{F}$ is called invariant on $L$
if $\lambda(x\cdot v,w)=-(-1)^{d(v)d(x)}\lambda(v,x\cdot w)$, for
all $x\in L$ and $v,w\in V$. The subspace
$\mathrm{rad}(\lambda):=\{v\in V\mid \lambda(v,w)=0$, for all $w\in
V\}$ is called the radical of $\lambda$. The form $\lambda$ is
nondegenerate if $\mathrm{rad}(\lambda)=0$.

\begin{Proposition}\label{p3}
There exists a nondegenerate super-symmetric (skew super-symmetric)
invariant bilinear form $\lambda$ on $V$ if and only if there exists
an isomorphism of $L$-modules $\phi:V\rightarrow V^{*}$ such that
$\phi(v)(w)=(-1)^{d(v)d(w)}\phi(w)(v)$
($\phi(v)(w)=-(-1)^{d(v)d(w)}\phi(w)(v)$), for all $v,w\in V$.
\end{Proposition}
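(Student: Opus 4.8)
The plan is to establish the correspondence between nondegenerate invariant bilinear forms on $V$ and symmetry-preserving module isomorphisms $V \to V^{*}$ by the standard ``currying'' construction, checking that each of the three properties (module-morphism, symmetry type, nondegeneracy) translates correctly across the correspondence. First I would, given a nondegenerate super-symmetric (resp.\ skew super-symmetric) invariant bilinear form $\lambda$, define $\phi: V \to V^{*}$ by $\phi(v)(w) := \lambda(v,w)$ for all $v,w \in V$; since $\lambda$ is $\mathbb{Z}_{2}$-graded one takes $\phi$ to be the $\mathbb{Z}_{2}$-homogeneous map of degree $\bar 0$ (or decomposes $\lambda$ into homogeneous components if needed). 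The symmetry condition $\phi(v)(w) = (-1)^{d(v)d(w)}\phi(w)(v)$ is then literally the super-symmetry (resp.\ skew super-symmetry) of $\lambda$, so that is immediate.

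The first real computation is that $\phi$ is a homomorphism of $L$-modules. Using the $L$-action on $V^{*}$ recalled in Section~2, namely $(x\cdot f)(w) = -(-1)^{d(x)d(f)} f(x\cdot w)$, one computes for $x \in L$, $v,w \in V$:
\[
(x\cdot \phi(v))(w) = -(-1)^{d(x)d(\phi(v))} \phi(v)(x\cdot w) = -(-1)^{d(x)d(v)} \lambda(v, x\cdot w),
\]
while the invariance of $\lambda$ gives $\lambda(x\cdot v, w) = -(-1)^{d(v)d(x)} \lambda(v, x\cdot w)$, hence $(x\cdot\phi(v))(w) = \lambda(x\cdot v, w) = \phi(x\cdot v)(w)$. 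Since $d(\phi) = \bar 0$, this is exactly the required identity $(x\cdot\phi)(v) = (-1)^{d(x)d(\phi)}\phi(x\cdot v)$, so $\phi$ is a homomorphism. Finally, $\ker\phi = \{v : \lambda(v,w)=0 \ \forall w\} = \mathrm{rad}(\lambda) = 0$, so $\phi$ is injective; as $V$ is finite dimensional and $\dim V = \dim V^{*}$, injectivity forces bijectivity, so $\phi$ is an isomorphism of $L$-modules with the stated symmetry property.

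For the converse, given such an isomorphism $\phi: V \to V^{*}$, define $\lambda(v,w) := \phi(v)(w)$. The symmetry hypothesis on $\phi$ gives precisely that $\lambda$ is super-symmetric (resp.\ skew super-symmetric); reversing the computation above, the fact that $\phi$ intertwines the $L$-actions on $V$ and $V^{*}$ yields the invariance identity $\lambda(x\cdot v,w) = -(-1)^{d(v)d(x)}\lambda(v,x\cdot w)$; and $\mathrm{rad}(\lambda) = \ker\phi = 0$ because $\phi$ is an isomorphism. Thus $\lambda$ is a nondegenerate invariant bilinear form of the required type. I do not anticipate a serious obstacle here — the main point requiring care is bookkeeping of the sign $(-1)^{d(x)d(f)}$ in the dual-module action and the consistent assumption that $\lambda$ (equivalently $\phi$) is $\mathbb{Z}_{2}$-homogeneous of degree $\bar 0$, which is what makes the super-symmetry condition and the module-homomorphism condition line up without extra signs.
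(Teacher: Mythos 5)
Your proposal is correct and follows essentially the same route as the paper: define $\phi(v)(w):=\lambda(v,w)$, read off the symmetry condition, verify the $L$-module homomorphism property from invariance and the dual-module action, get injectivity from $\ker\phi=\mathrm{rad}(\lambda)=0$ and bijectivity from $\dim V=\dim V^{*}$, then reverse the construction for the converse. Your explicit remark that $d(\phi)=\bar{0}$ is a small clarification the paper leaves implicit, but the argument is the same.
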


\begin{proof}
Let $\lambda$ be a nondegenerate super-symmetric (skew
super-symmetric) invariant bilinear form on $V$. Define
$\phi:V\rightarrow V^{*}$ such that $\phi(v)(w):=\lambda(v,w)$, for
all $v,w\in V$. Obviously, $\phi$ is a linear mapping such that
$\mathrm{ker}\phi=\mathrm{rad}(\lambda)=0$ and
$\phi(v)(w)=(-1)^{d(v)d(w)}\phi(w)(v)$
($\phi(v)(w)=-(-1)^{d(v)d(w)}\phi(w)(v)$). Hence $\phi$ is
injective. Since $\mathrm{dim}V=\mathrm{dim}V^{*}$, $\phi$ is
bijective. For $x\in L$ and $v,w\in V$, we have
\begin{eqnarray*}
\phi(x\cdot v)(w)&=&\lambda(x\cdot v,w)=-(-1)^{d(x)d(v)}\lambda(v,x\cdot w)\\
&=&-(-1)^{d(x)d(v)}\phi(v)(x\cdot w)=(-1)^{d(x)d(v)}(x\cdot \phi(v))(w).
\end{eqnarray*}
Thus $\phi$ is the desired isomorphism of $L$-modules.

Conversely, let $\phi$ be an isomorphism of $L$-modules such that
$\phi(v)(w)=(-1)^{d(v)d(w)}\phi(w)(v)$
($\phi(v)(w)=-(-1)^{d(v)d(w)}\phi(w)(v)$), for all $v,w\in V$. Put
$\lambda(v,w):=\phi(v)(w)$. Thus $\lambda$ be a super-symmetric
(skew super-symmetric) bilinear form on $V$. Furthermore,
\begin{eqnarray*}
\lambda(x\cdot v,w)&=&\phi(x\cdot v)(w)=(-1)^{d(x)d(\phi)}(x\cdot \phi(v))(w)\\
&=&-(-1)^{d(x)d(v)}\phi(v)(x\cdot w)=-(-1)^{d(x)d(v)}\lambda(v,x\cdot w),
\end{eqnarray*}
for all $x\in L$ and $v,w\in V$. Hence $\lambda$ is invariant. As
$\mathrm{rad}(\lambda)=\mathrm{ker}\phi=0$, $\lambda$ is
nondegenerate.
\end{proof}

\begin{Proposition}\label{p4}
Let $V$ be an irreducible $L$-module. If $V$ is isomorphic to $V^{*}$ as $L$-module,
then there exists a nondegenerate invariant bilinear form $\lambda$ on $V$ which is either super-symmetric or skew super-symmetric.
\end{Proposition}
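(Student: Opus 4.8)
The plan is to build the form directly from an isomorphism $\phi:V\to V^{*}$, and then to symmetrize it. Since $V\cong V^{*}$ as $L$-modules, fix such an isomorphism $\phi$. By Proposition~\ref{p3} it would be enough to replace $\phi$ by an isomorphism $\phi'$ satisfying $\phi'(v)(w)=\pm(-1)^{d(v)d(w)}\phi'(w)(v)$; the sign will be forced, not chosen. First I would define a second map $\phi^{\dagger}:V\to V^{*}$ by $\phi^{\dagger}(v)(w):=(-1)^{d(v)d(w)}\phi(w)(v)$ and check that $\phi^{\dagger}$ is again a homomorphism of $L$-modules (this is a short computation using the definition of the $L$-action on $V^{*}$ and the invariance-type identity that any module homomorphism into $V^{*}$ must satisfy). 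Then $\psi:=\phi^{-1}\circ\phi^{\dagger}$ is an $L$-module endomorphism of the irreducible module $V$.

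Next I would invoke a Schur-type argument. Because $V$ is irreducible and finite-dimensional, $\mathrm{End}_{L}(V)$ is a division ring finite-dimensional over $\mathbb{F}$; after passing to the algebraically closed case — or, more cheaply, by working with the eigenvalue of $\psi$ on a suitable extension and noting $\psi$ must act as a scalar because it commutes with the (irreducible) action — one gets $\psi=c\,\mathrm{id}_{V}$ for some scalar $c\in\mathbb{F}$ (or in a finite extension; one then checks $c$ can be taken in $\mathbb{F}$, or absorbs it). The key point is that applying the "dagger" operation twice returns the original map: a direct computation gives $(\phi^{\dagger})^{\dagger}=\phi$. Translating this through $\psi$ yields $c^{2}=1$, hence $c=\pm 1$ (here one uses that $\mathbb{F}$ has odd characteristic, so $1\neq -1$, and that $\psi$ is invertible so $c\neq 0$).

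Now I would set $\lambda(v,w):=\phi(v)(w)$ and split into the two cases. If $c=1$, then $\phi^{\dagger}=\phi$, which unwinds exactly to $\lambda(v,w)=(-1)^{d(v)d(w)}\lambda(w,v)$, i.e.\ $\lambda$ is super-symmetric; if $c=-1$ it is skew super-symmetric. In either case $\phi$ itself is already an isomorphism with the required symmetry, so by Proposition~\ref{p3} $\lambda$ is a nondegenerate invariant bilinear form of the stated type. If instead one only obtained $\psi=c\,\mathrm{id}$ over an extension field, I would rescale: replace $\phi$ by $\tfrac12(\phi+c^{-1}\phi^{\dagger})$ or $\tfrac12(\phi-c^{-1}\phi^{\dagger})$ — whichever is nonzero, and at least one is, again using $\mathrm{char}\,\mathbb{F}\neq 2$ — and observe this rescaled map is still an $L$-isomorphism (nonzero endomorphism-precomposition of an iso, on an irreducible module) with the clean $\pm$ symmetry.

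The main obstacle I anticipate is the Schur-lemma step over a field $\mathbb{F}$ that is merely of prime characteristic and not assumed algebraically closed: one must argue carefully that $\psi$ acts as a scalar (the endomorphism algebra of an absolutely irreducible module is $\mathbb{F}$, but irreducibility over $\mathbb{F}$ need not give this), and that the scalar $c$ with $c^{2}=1$ can be realized so that the symmetrization lives over $\mathbb{F}$ rather than an extension. The superalgebra bookkeeping — keeping track of the signs $(-1)^{d(\cdot)d(\cdot)}$ and verifying $(\phi^{\dagger})^{\dagger}=\phi$ and that $\phi^{\dagger}$ is a homomorphism — is routine but must be done with care, since a single misplaced parity would swap symmetric and skew-symmetric.
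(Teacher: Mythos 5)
Your main line of argument has a real gap at the Schur step. Over the ground field $\mathbb{F}$ of the paper --- an arbitrary field of prime characteristic, not assumed algebraically closed --- the endomorphism ring $\mathrm{End}_{L}(V)$ of an irreducible module is only a division algebra over $\mathbb{F}$, so $\psi=\phi^{-1}\circ\phi^{\dagger}$ need not act as a scalar. Worse, the identity $(\phi^{\dagger})^{\dagger}=\phi$ does not translate into $\psi^{2}=\mathrm{id}$: writing $\phi^{\dagger}=\phi\circ\psi$, a direct computation gives $(\phi\circ\psi)^{\dagger}=\psi^{*}\circ\phi^{\dagger}$, so what you actually obtain is $\psi^{*}\circ\phi\circ\psi=\phi$, which only collapses to $c^{2}=1$ \emph{after} you already know $\psi$ is scalar. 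Passing to an algebraic closure does not rescue this, since $V\otimes_{\mathbb{F}}\bar{\mathbb{F}}$ may fail to be irreducible. Your fallback is also not quite right as written: $\tfrac12(\phi+c^{-1}\phi^{\dagger})$ has the clean $\pm$ symmetry under $\dagger$ only if $c^{-1}=\pm1$, which is the very fact in question, and if $c$ lies in a proper extension the map does not live over $\mathbb{F}$ at all.

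The fix is to drop $c$ and the strong form of Schur's lemma entirely and symmetrize directly, which is what the paper does. Let $\beta(v,w):=\phi(v)(w)$ be the nondegenerate invariant form supplied by (the proof of) Proposition \ref{p3}, which does not use any symmetry of $\phi$, and set $\lambda(v,w):=\beta(v,w)+(-1)^{d(v)d(w)}\beta(w,v)$, i.e.\ the form corresponding to $\phi+\phi^{\dagger}$. Then $\lambda$ is super-symmetric and invariant, so $\mathrm{rad}(\lambda)$ is an $L$-submodule of $V$; by irreducibility it is $0$ or $V$. If it is $0$, then $\lambda$ is the desired super-symmetric form; if it is $V$, then $\lambda=0$, which says exactly that $\beta$ itself is skew super-symmetric (and $\beta$ is already nondegenerate and invariant). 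This uses only the weak consequence of irreducibility that a submodule is $0$ or everything, requires no scalar $c$, and imposes no hypothesis on $\mathbb{F}$ beyond what the paper assumes. Your $\dagger$ formalism and the verification that $\phi^{\dagger}$ is again an $L$-homomorphism are correct and are in fact the map-level version of this symmetrization; only the route through $\psi=c\,\mathrm{id}$ needs to be discarded.
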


\begin{proof}
By the proof of Proposition \ref{p3}, there exists a nondegenerate invariant bilinear form $\beta$ on $V$.
Let
\[
\lambda(v,w)=\beta(v,w)+(-1)^{d(v)d(w)}\beta(w,v), \quad v,w\in V.
\]
Clearly, $\lambda$ is a super-symmetric bilinear form on $V$. Since
$V$ is an irreducible $L$-module, this implies that
$\mathrm{rad}(\lambda)=\mathrm{ker}\phi=0$ or
$\mathrm{rad}(\lambda)=\mathrm{ker}\phi=V$. Therefore, $\lambda$ is
either nondegenerate or $0$. It follows that either $\beta$ or
$\lambda$ is desired form.
\end{proof}

\begin{Theorem}\label{th2}
Let $L$ be a $\mathbb{Z}$-graded Lie superalgebra over $\mathbb{F}$
and $V$ be an $L_{0}$-module. Then the following statements are
equivalent.
\begin{enumerate}
\item[(1)] There exists a nondegenerate super-symmetric or skew super-symmetric invariant bilinear form on $\mathrm{Ind}_{K}(V_{\sigma})$.
\item[(2)] There exists an isomorphism of $L_{0}$-modules
$\zeta:V\rightarrow (V_{\sigma})^{*}$ such that
$\zeta(v)(w)=(-1)^{d(v)d(w)}\zeta(w)(v)$ or
$\zeta(v)(w)=-(-1)^{d(v)d(w)}\zeta(w)(v)$, $v,w\in V$.
\end{enumerate}
\end{Theorem}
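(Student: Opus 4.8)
The plan is to reduce everything to Theorem \ref{th1} together with Proposition \ref{p3}. First I would observe that since $V$ is an $L_{0}$-module, we regard it as a $K$-module by letting $L^{+}$ act trivially, so $V_{\sigma}$ is a genuine $K$-module and $\mathrm{Ind}_{K}(V_{\sigma})$ is defined. Now apply Proposition \ref{p3} with the $L$-module $\mathrm{Ind}_{K}(V_{\sigma})$ in place of $V$: statement (1) holds if and only if there is an isomorphism of $L$-modules
\[
\phi:\mathrm{Ind}_{K}(V_{\sigma})\rightarrow (\mathrm{Ind}_{K}(V_{\sigma}))^{*}
\]
satisfying the (skew) super-symmetry condition $\phi(m)(m')=\pm(-1)^{d(m)d(m')}\phi(m')(m)$. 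In particular (1) implies $\mathrm{Ind}_{K}(V_{\sigma})\cong(\mathrm{Ind}_{K}(V_{\sigma}))^{*}$, and Theorem \ref{th1} then gives $V\cong(V_{\sigma})^{*}$ as $K$-modules. Restricting this $K$-isomorphism to $L_{0}$ (and noting $L^{+}$ acts trivially on both sides, so the $K$-module structure is just the $L_{0}$-module structure) yields an $L_{0}$-isomorphism $\zeta:V\to(V_{\sigma})^{*}$. The point that needs care is transporting the (skew) super-symmetry of $\phi$ on the big module down to the required symmetry of $\zeta$ on $V$; this is where I expect the main work to lie.

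For that transport, I would make the chain of isomorphisms from Lemma \ref{lm1} and Proposition \ref{p2} explicit. Composing
\[
\mathrm{Ind}_{K}(V_{\sigma})\xrightarrow{\ \phi\ }(\mathrm{Ind}_{K}(V_{\sigma}))^{*}\xrightarrow{\ \Psi\ }\mathrm{Coind}_{K}((V_{\sigma})^{*})\xleftarrow{\ \Phi'\ }\mathrm{Ind}_{K}((V_{\sigma})^{*}\text{-twist})
\]
one obtains an $L$-module isomorphism between two generalized reduced Verma modules, which by Proposition \ref{p1}(2) is determined by its effect on degree-zero components, i.e. by an $L_{0}$-isomorphism $\zeta$. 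The bilinear form $\lambda$ on $\mathrm{Ind}_{K}(V_{\sigma})$ corresponding to $\phi$ restricts to a form on the degree-zero piece $P_{0}\cong V$ (under $\mu$ of Proposition \ref{p1}(2)), and invariance of $\lambda$ under $L^{-}$ is exactly what forces $\mathrm{rad}(\lambda)$ to be a graded submodule so that the form is recoverable from its restriction to $V$. Concretely, using $1\otimes v$ as generators, I would show $\phi(1\otimes v)(1\otimes w)=\zeta(v)(w)$ up to a fixed sign depending only on parities, and then the (skew) super-symmetry of $\phi$ translates verbatim into the stated condition on $\zeta$.

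Conversely, given $\zeta:V\to(V_{\sigma})^{*}$ with the (skew) super-symmetry property, I would build the form on $\mathrm{Ind}_{K}(V_{\sigma})$ by running the above chain backwards: $\zeta$ induces an $L_{0}$-isomorphism, hence (by the induction functor and Lemma \ref{lm1}/Proposition \ref{p2}) an $L$-module isomorphism $\phi:\mathrm{Ind}_{K}(V_{\sigma})\to(\mathrm{Ind}_{K}(V_{\sigma}))^{*}$, and then define $\lambda(m,m'):=\phi(m)(m')$. By Proposition \ref{p3} this $\lambda$ is automatically a nondegenerate invariant bilinear form; the only thing to verify is that it is super-symmetric (resp. skew super-symmetric), and for that it suffices—since $U(L)$ is generated over $\theta(L,K)$ by the $e^{\alpha}\xi^{u}$ acting on the generators $1\otimes v$, and since the form is invariant—to check the symmetry on the generating set $\{1\otimes v\}$, where it reduces precisely to the symmetry of $\zeta$. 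The main obstacle, as noted, is bookkeeping the $(-1)$-sign factors introduced by $\Phi$, $\Psi$, the principal anti-automorphism $x\mapsto x^{T}$, and the twist by $\sigma$; one must check that these signs are consistent with—and do not spoil—the sign in the (skew) super-symmetry condition, and in particular that the anti-automorphism behaves well enough that invariance of $\lambda$ propagates the generator-level identity to all of $\mathrm{Ind}_{K}(V_{\sigma})$.
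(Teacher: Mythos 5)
Your overall route coincides with the paper's: combine Proposition \ref{p3} with Theorem \ref{th1} to obtain the isomorphism $\zeta$, and then transport the (skew) super-symmetry through the explicit isomorphisms of Lemma \ref{lm1} and Proposition \ref{p2}. However, the step you yourself single out as ``the main work'' rests on a false premise. You propose to show $\phi(1\otimes v)(1\otimes w)=\zeta(v)(w)$ up to a fixed sign, and to read off the symmetry of $\zeta$ from the symmetry of $\phi$ on the generating set $\{1\otimes v\}$. This cannot work: tracing $\phi$ through $\Phi$ and $\Psi$ gives
\[
\phi(e^{\alpha}\xi^{u}\otimes v_{1})(e^{\beta}\xi^{t}\otimes v_{2})=(\mathrm{sign})\,\delta(\pi,\alpha+\beta)\,\delta(E,u+t)\,\zeta(v_{1})(v_{2})
\]
(this is equation (\ref{3}) of the paper), so the nondegenerate form pairs the degree-zero piece $1\otimes V$ with the opposite extreme piece $e^{\pi}\xi^{E}\otimes V$, and $\lambda(1\otimes v,1\otimes w)=0$ whenever $L^{-}\neq 0$. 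In particular the form is \emph{not} recoverable from its restriction to $V\cong P_{0}$ --- that restriction vanishes identically --- and checking symmetry on the set $\{1\otimes v\}$ is vacuous. The same defect undermines your converse direction, where you again reduce the verification to the pairs $(1\otimes v,1\otimes w)$.

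The correct transport is to compare $\phi(x_{1}\otimes v_{1})(x_{2}\otimes v_{2})$ with $\phi(x_{2}\otimes v_{2})(x_{1}\otimes v_{1})$ for complementary monomials $x_{1}=e^{\alpha}\xi^{u}$, $x_{2}=e^{\beta}\xi^{t}$ with $\alpha+\beta=\pi$ and $u+t=E$. The sign bookkeeping there produces an extra factor governed by $d(\xi^{E})=d(x_{1})+d(x_{2})$, which is precisely why the theorem is stated with an ``or'' and why the Remark following it distinguishes the case $d(x_{1})=d(x_{2})=\bar{1}$ (where super-symmetric upstairs corresponds to skew super-symmetric downstairs) from the other cases. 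Your claim that the symmetry ``translates verbatim'' would miss this sign. The rest of your outline --- the reduction via Propositions \ref{p3} and \ref{p1} and Theorem \ref{th1}, and the use of invariance to propagate an identity from a generating set --- is sound and matches the paper.
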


\begin{proof}
Suppose that there exists a nondegenerate super-symmetric or skew super-symmetric invariant bilinear form on $\mathrm{Ind}_{K}(V_{\sigma})$.
By Proposition \ref{p3}, there exists an isomorphism of $L$-modules
$\phi:\mathrm{Ind}_{K}(V_{\sigma})\rightarrow (\mathrm{Ind}_{K}(V_{\sigma}))^{*}$ such that
\begin{eqnarray}
\phi(x_{1}\otimes v_{1})(x_{2}\otimes v_{2})
&=&(-1)^{(d(x_{1})+d(v_{1}))(d(x_{2})+d(v_{2}))}\phi(x_{2}\otimes v_{2})(x_{1}\otimes v_{1})\label{1}
\end{eqnarray}
or
\begin{eqnarray}
\phi(x_{1}\otimes v_{1})(x_{2}\otimes v_{2})
&=&-(-1)^{(d(x_{1})+d(v_{1}))(d(x_{2})+d(v_{2}))}\phi(x_{2}\otimes v_{2})(x_{1}\otimes v_{1}),\label{2}
\end{eqnarray}
where $x_{1},x_{2}\in U(L)$ and $v_{1},v_{2}\in V$. Theorem
\ref{th1} shows that there exists an isomorphism of $L_{0}$-modules
$\zeta:V\rightarrow (V_{\sigma})^{*}$.

Let $x_{1}=e^{\alpha}\xi^{u}\in U(L^{-})$ and $x_{2}=e^{\beta}\xi^{t}\in U(L^{-})$,
where $0\leq\alpha\leq \pi$, $0\leq\beta\leq \pi$ and $u,t\in \mathbb{B}$.
By the proof of Lemma \ref{lm1} and Proposition \ref{p2}, we have
\begin{eqnarray}
& &\phi(x_{1}\otimes v_{1})(x_{2}\otimes v_{2})
=(-1)^{d(x_{1})d(x_{2})+d(x_{1})d(v_{1})}\chi_{\zeta(v_{1})}^{(\pi,E)}(x_{2}^{T}x_{1})(v_{2})\nonumber\\
&=&(-1)^{d(x_{1})d(x_{2})+d(x_{1})d(v_{1})+(d(\zeta)+d(v_{1})+d(\xi^{E}))(d(x_{1})+d(x_{2}))}\delta(\pi,\alpha+\beta)\delta(E,u+t)\zeta(v_{1})(v_{2})\nonumber\\
&=&(-1)^{d(x_{1})d(x_{2})+d(x_{2})d(v_{1})+(d(\zeta)+d(\xi^{E}))(d(x_{1})+d(x_{2}))}\zeta(v_{1})(v_{2}).\label{3}
\end{eqnarray}
According to (\ref{1}), (\ref{2}) and (\ref{3}),
\[\zeta(v_{1})(v_{2})=(-1)^{d(v_{1})d(v_{2})}\zeta(v_{2})(v_{1})
\mbox{  or }
\zeta(v_{1})(v_{2})=-(-1)^{d(v_{1})d(v_{2})}\zeta(v_{2})(v_{1}),\]
for all $v_{1},v_{2}\in V$.

Conversely, it also follows from Lemma \ref{lm1}, Proposition \ref{p2}, Theorem \ref{th1} and Proposition \ref{p3}.
\end{proof}

\begin{Remark}
Following the notations in the proof of Theorem \ref{th2}, we have
the following results:
\begin{enumerate}
\item[(1)] If $d(x_{1})$ and $d(x_{2})$ need not all $\bar{1}$,
then there exists a nondegenerate super-symmetric (skew
super-symmetric) invariant bilinear form on
$\mathrm{Ind}_{K}(V_{\sigma})$ if and only if there exists an
isomorphism of $L_{0}$-modules $\zeta:V\rightarrow (V_{\sigma})^{*}$
such that
\[\zeta(v_{1})(v_{2})=(-1)^{d(v_{1})d(v_{2})}\zeta(v_{2})(v_{1})\quad
(\zeta(v_{1})(v_{2})=-(-1)^{d(v_{1})d(v_{2})}\zeta(v_{2})(v_{1})),\]
for all $v_{1},v_{2}\in V$.
\item[(2)] If $d(x_{1})=d(x_{2})=\bar{1}$, then there exists a
nondegenerate super-symmetric (skew super-symmetric) invariant
bilinear form on $\mathrm{Ind}_{K}(V_{\sigma})$ if and only if there
exists an isomorphism of $L_{0}$-modules $\zeta:V\rightarrow
(V_{\sigma})^{*}$ such that
$\zeta(v_{1})(v_{2})=-(-1)^{d(v_{1})d(v_{2})}\zeta(v_{2})(v_{1})$
($\zeta(v_{1})(v_{2})=(-1)^{d(v_{1})d(v_{2})}\zeta(v_{2})(v_{1})$),
for all $v_{1},v_{2}\in V$.
\end{enumerate}
\end{Remark}

\section{Generalized reduced Verma modules and mixed products of modules}
\label{}
In this section, the relation between generalized reduced Verma modules and mixed products of modules
over $\mathbb{Z}$-graded modular Lie superalgebras of Cartan type will be discussed.

\begin{Proposition}\label{th3}
Let $L$ be a $\mathbb{Z}$-graded Lie superalgebra over $\mathbb{F}$
and $V=\oplus_{i\geq 0}V_{i}$ be a positively and transitively
graded $L$-module such that $z_{i}\cdot V=0$, $1\leq i\leq k$. Then
the linear mapping $\psi:V\rightarrow \mathrm{Coind}_{K}(V_{0})$
defined by $\psi(v)(x)=(-1)^{d(x)d(v)}\mathrm{pr_{0}}(x\cdot v)$,
for all $x\in U(L)$ and $v\in V$, is an injective homomorphism of
$L$-modules, where $\mathrm{pr_{0}}:V\rightarrow V_{0}$ denotes the
canonical projection. In particular,
$\psi(V_{0})=\mathrm{Coind}_{K}(V_{0})_{0}$ and $zd(\psi)=0$.
\end{Proposition}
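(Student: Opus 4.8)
The plan is to verify, in turn: (i) that each $\psi(v)$ is genuinely $\theta(L,K)$-linear, so that $\psi$ maps into $\mathrm{Coind}_K(V_0)$; (ii) that $\psi$ is a morphism of $L$-modules; (iii) the grading statements $\psi(V_i)\subseteq\mathrm{Coind}_K(V_0)_i$ and $\psi(V_0)=\mathrm{Coind}_K(V_0)_0$; and (iv) injectivity. Two preliminary observations drive everything. First, from $L_j\cdot V_i\subseteq V_{i+j}$ and the $\mathbb{Z}$-grading of $U(L)$, for homogeneous $x\in U(L)_j$ and $v\in V_i$ one gets $\mathrm{pr}_0(x\cdot v)=0$ unless $j=-i$; hence $\psi(v)$ is supported on $U(L)_{-i}$ when $v\in V_i$, which is exactly the condition defining $\mathrm{Coind}_K(V_0)_i$ in the notation of Proposition \ref{p1}. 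This yields (iii) at once, gives $zd(\psi)=0$, and since $\psi(v)$ has $\mathbb{Z}_2$-degree $d(v)$, $\psi$ is even. Second, because $z_i\in Z(U(L))$ and $z_i\cdot V=0$, the $\theta(L,K)$-action on $V$ factors through the augmentation $\mathbb{F}[z_1,\dots,z_k]\otimes U(K)\to U(K)$; moreover $K_0=L_0$ forces $U(K)_0=U(L_0)$ by P-B-W, and $L^+$ acts trivially on $V_0$, so on $V_0$ the algebra $\theta(L,K)$ acts through $\vartheta\mapsto\epsilon(\vartheta)_0$, the degree-$0$ component of the image of $\vartheta$ in $U(K)$.

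For (i), given $\vartheta\in\theta(L,K)$ and $x\in U(L)$, I would write $w:=x\cdot v=\sum_{j\ge0}w_j$ with $w_j\in V_j$; then $\vartheta\cdot w=\epsilon(\vartheta)\cdot w$, and since $\epsilon(\vartheta)_d\cdot w_j\in V_{d+j}$ only the term $\epsilon(\vartheta)_0\cdot w_0$ lands in $V_0$, whence $\mathrm{pr}_0(\vartheta\cdot(x\cdot v))=\epsilon(\vartheta)_0\cdot\mathrm{pr}_0(x\cdot v)$, which is $\vartheta\cdot_{V_0}\psi(v)(x)$ up to the evident scalar. Tracking the sign prefactor $(-1)^{(d(\vartheta)+d(x))d(v)}$ against $(-1)^{d(\vartheta)d(v)+d(x)d(v)}$ and using $d(\psi(v))=d(v)$ then shows $\psi(v)(\vartheta x)=(-1)^{d(\vartheta)d(v)}\vartheta\cdot\psi(v)(x)$, i.e. $\psi(v)\in\mathrm{Hom}_{\theta(L,K)}(U(L),V_0)$. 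Assertion (ii) is then a one-line sign computation: both $\psi(y\cdot v)(x)$ and $(y\cdot\psi(v))(x)$ equal $(-1)^{d(x)d(y)+d(x)d(v)}\mathrm{pr}_0((xy)\cdot v)$, so $\psi(y\cdot v)=y\cdot\psi(v)$ for all $y\in L$; naturality in $V$ is immediate from the defining formula.

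For the remaining half of (iii), $\psi(V_0)=\mathrm{Coind}_K(V_0)_0$: one has $\psi(V_0)\subseteq\mathrm{Coind}_K(V_0)_0$, and the isomorphism $\mu\colon\mathrm{Coind}_K(V_0)_0\to V_0$, $\mu(f)=f(1)$, of Proposition \ref{p1}(2) satisfies $\mu(\psi(v))=\mathrm{pr}_0(v)=v$ for $v\in V_0$; thus $\mu\circ\psi|_{V_0}=\mathrm{id}_{V_0}$, and since $\mu$ is bijective, $\psi|_{V_0}$ is a bijection onto $\mathrm{Coind}_K(V_0)_0$. Finally, (iv) follows by induction on $\mathbb{Z}$-degree, which is legitimate because $\psi$ is $\mathbb{Z}$-homogeneous: the degree-$0$ case is the bijectivity of $\psi|_{V_0}$ just obtained, and for $v\in V_i$ with $i\ge1$ and $\psi(v)=0$, the identity $\psi(x\cdot v)(y)=(-1)^{d(y)d(x\cdot v)}\mathrm{pr}_0((yx)\cdot v)=\pm\,\psi(v)(yx)=0$ shows $\psi(x\cdot v)=0$ for every $x\in L^-$; since $x\cdot v$ lies in a strictly lower (or negative, hence vanishing) graded component, the induction hypothesis gives $x\cdot v=0$, so $L^-\cdot v=0$, and transitivity of $V$ forces $v\in V_0\cap V_i=0$.

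The step I expect to require the most care is (i): one must be precise about the convention for $\theta(L,K)$-linearity (left versus right action, and the twisted action $\circ$ used in Section 3) and keep the $\mathbb{Z}$- and $\mathbb{Z}_2$-degree bookkeeping consistent with the sign conventions fixed for $\mathrm{Coind}_K$. Everything else is either a formal sign check or a direct invocation of Proposition \ref{p1}; the one genuinely structural input beyond Section 3 is the use of transitivity in the injectivity induction, which is the only place that hypothesis is needed.
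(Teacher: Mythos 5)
Your proposal is correct and follows essentially the same route as the paper: well-definedness via the $\theta(L,K)$-equivariance of $\mathrm{pr}_0$ combined with the grading, the $L$-module property by the same sign computation, injectivity via homogeneity of $\ker\psi$ plus transitivity (your induction on degree is just a repackaging of the paper's minimal-counterexample argument), and the identification $\psi(V_0)=\mathrm{Coind}_K(V_0)_0$ through evaluation at $1$. The only cosmetic difference is that you obtain surjectivity onto $\mathrm{Coind}_K(V_0)_0$ by citing the isomorphism $\mu$ of Proposition \ref{p1}(2), whereas the paper re-runs the underlying P-B-W computation directly; both are fine.
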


\begin{proof}
Note that $\mathrm{pr_{0}}$ is a homomorphism of
$\theta(L,K)$-modules. In fact, for any $h_{j}\in \theta(L,K)_{j}$
and $v_{i}\in V_{i}$, we have $\mathrm{pr_{0}}(h_{j}\cdot
v_{i})=(-1)^{d(h_{j})d(\mathrm{pr_{0}})}h_{j}\cdot\mathrm{pr_{0}}(v_{i})$,
where $i,j\in \mathbb{N}_{0}$. Since the mapping $U(L)\rightarrow V$
defined by $x\mapsto (-1)^{d(x)d(v)}x\cdot v$ also satisfies this
property, $\psi$ is well-defined. Moreover, for an arbitrary element
$l\in L$, we obtain
\begin{eqnarray*}
\psi(l\cdot v)(x)&=&(-1)^{d(x)(d(l)+d(v))}\mathrm{pr_{0}}(x\cdot(l\cdot v))\\
&=&(-1)^{d(l)(d(x)+d(v))}\psi(v)(x\cdot l)=(-1)^{d(l)d(\psi)}(l\cdot \psi(v))(x).
\end{eqnarray*}
Therefore, $\psi$ is a homomorphism of $L$-modules. To prove $\psi$
is injective, we assume that $\mathrm{ker}\psi\neq 0$. Evidently,
$zd(\psi)=0$ and thereby $\mathrm{ker}\psi$ is a
$\mathbb{Z}$-homogeneous subspace of $V$. Then $\mathrm{ker}\psi\neq
0$ leads to the existence of a minimal $i\geq 0$ such that
$\mathrm{ker}\psi\cap V_{i}\neq 0$. Let $v_{i}\in
\mathrm{ker}\psi\cap V_{i}$ and $l\in L_{-j}$ ($j>0$). It follows
that $x\cdot v_{i}=\mathrm{pr_{0}}(x\cdot
v_{i})=(-1)^{d(x)d(v_{i})}\psi(v_{i})(x)=0$ for every $x\in
U(L)_{-i}$. If $q\neq j-i$, then
\[
\psi(l\cdot v_{i})(x)=(-1)^{d(x)(d(l)+d(v_{i}))}\mathrm{pr_{0}}(x\cdot(l\cdot v_{i}))=0,
\]
where $x\in U(L)_{q}$. If $q=j-i$, then $xl\in U(L)_{-i}$ and
$(xl)\cdot v_{i}=0$. Consequently, $l\cdot v_{i}$ belongs to the
trivial subspace $\mathrm{ker}\psi\cap V_{i-j}$. Since $V$ is
transitive, $v_{i}\in V_{0}$ and $i=0$. As a result, $x\cdot
v_{0}=0$ for all $x\in U(L)_{0}$. It follows from $1\in U(L)_{0}$
that $v_{0}=0$. This conclusion confutes the assumption
$\mathrm{ker}\psi\neq 0$ and thereby $\psi$ is an injective
homomorphism of $L$-modules.

Let $\mu:\mathrm{Coind}_{K}(V_{0})_{0}\rightarrow V_{0}$ such that
$\mu(f)=f(1)$. Let $x$ be an element of $U(L)_{j}$. If $j\neq 0$,
then $\mathrm{pr_{0}}(x\cdot f(1))=0$ and $f(x)=0$. In the case of
$j=0$, the P-B-W theorem provides a presentation
$x=\sum_{j=1}^{n}\sum_{i\geq 0}a_{ij}b_{ij}$, where $a_{ij}\in
U(K)_{i}$ and $b_{ij}\in U(L^{-})_{-i}$. Clearly,
\begin{eqnarray*}
& &f(x)-(-1)^{d(x)d(f)}\mathrm{pr_{0}}(x\cdot f(1))\\
&=&\sum_{j=1}^{n}\sum_{i\geq 0}((-1)^{d(a_{ij})d(f)}a_{ij}f(b_{ij})
-(-1)^{d(x)d(f)}a_{ij}\mathrm{pr_{0}}(b_{ij}f(1)))\\
&=&\sum_{j=1}^{n}((-1)^{d(a_{0j})d(f)}a_{0j}f(b_{0j})-(-1)^{d(x)d(f)}a_{0j}\mathrm{pr_{0}}(b_{0j}f(1)))\\
&=&\sum_{j=1}^{n}(-1)^{d(x)d(f)}(a_{0j}b_{0j}f(1)-a_{0j}b_{0j}f(1))=0.
\end{eqnarray*}
For an arbitrary element $x\in U(L)$,
$f(x)=(-1)^{d(x)d(f)}\mathrm{pr_{0}}(x\cdot f(1))$. Consequently,
$\psi\circ\mu=\mathrm{id}_{\mathrm{Coind}_{K}(V_{0})_{0}}$ and
$\psi(V_{0})=\mathrm{Coind}_{K}(V_{0})_{0}$.
\end{proof}

For $\alpha=(\alpha_1,\ldots,\alpha_k)\in\mathbb{N}_0^k$, we put
$|\alpha|:=\sum_{i=1}^{k}\alpha_{i}$. Let $\mathcal
{O}(k,\underline{m})$ denote the divided power algebra over
$\mathbb{F}$ with a $\mathbb{F}$-basis
$\{x^{(\alpha)}\mid\alpha\in\mathbb{A}(k,\underline{m})\}$, where
\[
\mathbb{A}(k,\underline{m}):=\left\{\alpha:=(\alpha_{1},\ldots,\alpha_{k})\in\mathbb{N}_0^{k}\mid
0\leqslant\alpha_{i}\leqslant p^{m_{i}}-1, i=1,2,\ldots,k\right\}.
\]
Let $\Lambda(l)$ be the exterior superalgebra over $\mathbb{F}$ in
$l$ variables $\xi_{1},\xi_{2}$, $\ldots,\xi_{l}$. Denote by
$\mathcal{O}(k,l,\underline{m})$ the tenser product
$\mathcal{O}(k,\underline{m})\otimes_{\mathbb{F}}\Lambda(l)$.

Put $\mathrm{Y}_{0}:=\{1,2,\ldots,k\}$ and
$\mathrm{Y}_{1}:=\{1,2,\ldots,l\}$. If $u\in\mathbb{B}_{s}$, $j\in
\{u\}$, then we suppose that $u-\langle j\rangle\in
\mathbb{B}_{s-1}$ such that $\{u-\langle
j\rangle\}=\{u\}\setminus\{j\}$. Let $u(j)=|\{l\in\{u\}\mid l<j\}|$.
If $j\in \mathrm{Y}_{1}\setminus\{u\}$, then we put $u(j)=0$ and
$\xi^{u-\langle j\rangle}=0$. Clearly,
$\left\{x^{(\alpha)}\xi^{u}\mid
\alpha\in\mathbb{A}(k,\underline{m}), u\in \mathbb{B}\right\}$
constitutes an $\mathbb{F}$-basis of
$\mathcal{O}(k,l,\underline{m})$ and
$zd(x^{(\alpha)}\xi^{u})=|\alpha|+|u|\geq0$.

Let $D_{1},\ldots,D_{k},d_{1},\ldots,d_{l}$ be the linear
transformations of $\mathcal{O}(k,l,\underline{m})$ and
$\varepsilon_i:=(\delta_{i1}$, $\ldots,\delta_{ik})$ such that
\begin{eqnarray*}
   D_i(x^{(\alpha)}\xi^{u})=x^{(\alpha-\varepsilon_{i})}\xi^{u},\quad
   i\in\mathrm{Y}_{0},\quad
   d_j(x^{(\alpha)}\xi^{u})=(-1)^{u(j)}x^{(\alpha)}\xi^{u-\langle j\rangle}, \quad  j\in\mathrm{Y}_{1},
\end{eqnarray*} where $\delta_{ij}$ is Kronecker delta, defined by
$\delta_{ij}=1$ if $i=j$ and $\delta_{ij}=0$ otherwise.

Modular Lie superalgebras of Cartan type $L(k,l,\underline{m})$
($L=W, S, H, K$) are subalgebras of the derivation superalgebras of
$\mathcal{O}(k,l,\underline{m})$. For the precise definitions please
refer to \cite{zhang}. If $L=W, S, H$, then $\{D_{1},\ldots,D_{k}\}$
is the canonical basis of $L(k,l,\underline{m})^{-}\cap
L(k,l,\underline{m})_{\bar{0}}$ and $\{d_{1},\ldots,d_{l}\}$ is the
canonical basis of $L(k,l,\underline{m})^{-}\cap
L(k,l,\underline{m})_{\bar{1}}$. The definition of the product in
$L(k,l,\underline{m})$ (see \cite{zhang}) entails the vanishing
$\mathrm{ad}D_{i}^{p^{m_{i}}}$ on $L(k,l,\underline{m})$ and we
therefore define $z_{i}:=D_{i}^{p^{m_{i}}}$, $1\leq i\leq k$.

\begin{Theorem}\label{c}
Let $L(k,l,\underline{m})$ ($L=W, S, H$) denote a $\mathbb{Z}$-graded Lie superalgebra of Cartan type.
If $V$ is an $L(k,l,\underline{m})_{0}$-module,
then $\mathrm{Ind}_{K}(V_{\sigma})$ is isomorphic to the mixed product $\mathcal{O}(k,l,\underline{m})\otimes V$.
\end{Theorem}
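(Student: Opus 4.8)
The strategy is to construct an explicit $\mathbb{F}$-linear isomorphism $\Theta:\mathrm{Ind}_{K}(V_{\sigma})\to\mathcal{O}(k,l,\underline{m})\otimes V$ and then verify that it intertwines the two $L(k,l,\underline{m})$-actions. First I would recall that, by the P-B-W theorem as used in Section~2, the module $\mathrm{Ind}_{K}(V_{\sigma})=U(L)\otimes_{\theta(L,K)}V_{\sigma}$ is a free $\mathbb{F}$-module with basis $\{D^{\alpha}d^{u}\otimes v_{j}\mid \alpha\in\mathbb{A}(k,\underline{m}),\ u\in\mathbb{B},\ v_{j}\ \text{a basis vector of}\ V\}$, where $D^{\alpha}:=D_{1}^{\alpha_{1}}\cdots D_{k}^{\alpha_{k}}$ and $d^{u}:=d_{i_{1}}\cdots d_{i_{s}}$ for $u=\langle i_{1},\dots,i_{s}\rangle$; here the relations $z_{i}=D_{i}^{p^{m_{i}}}$ act as $0$ on $\mathrm{Ind}_{K}(V_{\sigma})$ force $0\leq\alpha_{i}\leq p^{m_{i}}-1$, which is exactly the index range $\mathbb{A}(k,\underline{m})$. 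On the other side, $\mathcal{O}(k,l,\underline{m})\otimes V$ has the $\mathbb{F}$-basis $\{x^{(\alpha)}\xi^{u}\otimes v_{j}\}$ with the same index set. So I define
\[
\Theta\bigl(D^{\alpha}d^{u}\otimes v\bigr):=\pm\, x^{(\alpha)}\xi^{u}\otimes v,
\]
where the sign is the Koszul sign needed to make $\Theta$ a $\mathbb{Z}_{2}$-degree preserving map (recall $d^{u}$ and $\xi^{u}$ carry the parity $|u|\bmod 2$, and divided powers $x^{(\alpha)}$ and $D^{\alpha}$ are even). Since $\Theta$ sends a basis to a basis, it is a bijection; the content is that it is an $L$-module homomorphism.

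**The main computation.** The mixed product structure on $\mathcal{O}(k,l,\underline{m})\otimes V$ is (by definition, cf.\ \cite{zhang1,zhang2,zhang3}) given by a Leibniz-type formula: a homogeneous $X\in L(k,l,\underline{m})_{q}$ acts on $f\otimes v$ by $X\cdot(f\otimes v)=(X\cdot f)\otimes v\ +\ (-1)^{d(X)d(f)}\sum (\text{lower-order part of }X\text{ acting as }L_{0}\text{ on }V)$, where the first term uses the natural action of $L$ on $\mathcal{O}(k,l,\underline{m})$ as derivations and the second uses the $L_{0}$-module (more precisely $K$-module via $L^{+}$ acting trivially, then $\sigma$-twisted) structure on $V$. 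To check $\Theta$ is a homomorphism it suffices, by the derivation property of both actions and the fact that $L(k,l,\underline{m})$ is generated by $L_{-1}\oplus L_{0}\oplus L_{1}$ (for $W$; for $S,H$ one uses the standard generating degrees), to verify the intertwining on the generators $D_{i}$, $d_{j}$ ($i\in\mathrm{Y}_0$, $j\in\mathrm{Y}_1$, these span $L_{-1}$ in the Cartan-type grading), on $L_{0}$, and on $L_{1}$. For $D_{i}$: in $\mathrm{Ind}_{K}(V_{\sigma})$ we have $D_{i}\cdot(D^{\alpha}d^{u}\otimes v)=D_{i}D^{\alpha}d^{u}\otimes v$, and on the $\mathcal{O}$-side $D_{i}(x^{(\alpha)}\xi^{u})=x^{(\alpha-\varepsilon_i)}\xi^{u}$, exactly the derivation action; the only subtlety is that $D_{i}D^{\alpha}$ in $U(L)$ is $D^{\alpha+\varepsilon_i}$ up to reordering (the $D_i$'s commute, being a basis of the abelian $L_{-1}\cap L_{\bar 0}$), matching the divided-power rule $D_{i}\cdot x^{(\alpha)}=x^{(\alpha-\varepsilon_i)}$ once one tracks the binomial coefficient bookkeeping built into divided powers. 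For $d_{j}$ one does the analogous check with the sign $(-1)^{u(j)}$ appearing from the anticommutation of the $\xi$'s, which is precisely the Koszul sign absorbed into the definition of $\Theta$. For $X\in L_{0}$: acting on $D^{\alpha}d^{u}\otimes v$ one moves $X$ past $D^{\alpha}d^{u}$ using the bracket relations in $L(k,l,\underline{m})$ (which realize $L_{0}\cong$ a classical Lie superalgebra acting on the span of the $D_i,d_j$), picking up a sum of terms $[\cdots[X,\cdot],\cdots]D^{\alpha'}d^{u'}\otimes v$ plus the boundary term $D^{\alpha}d^{u}\otimes(X\cdot v)$; on the $\mathcal{O}$-side the same combinatorics appear because $X$ acts as a derivation of $\mathcal{O}(k,l,\underline{m})$ and the $\sigma$-twist in $V_{\sigma}$ accounts for exactly the supertrace term $\mathrm{str}(\rho(X))$ that the derivation action on the "top" monomial $x^{(\pi)}\xi^{E}$ produces. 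For $X\in L_{1}$ the verification is similar but uses that $X$ lowers $\alpha$-degree by more in a controlled way; here one uses transitivity and the fact that $L^{+}$ was made to act trivially on $V$ as a $K$-module.

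**Where the difficulty lies.** The bijection and the degree bookkeeping are routine; the real work is the $L_{0}$-equivariance, and in particular pinning down that the $\sigma$-twist $V\rightsquigarrow V_{\sigma}$ on the induced side corresponds exactly to the bare action of $L_{0}$ on $V$ in the mixed product $\mathcal{O}(k,l,\underline{m})\otimes V$. The point is that in $\mathrm{Ind}_{K}(V_{\sigma})$ an element of $V_{\sigma}$ sits (up to the isomorphism $\Phi$ of Lemma~\ref{lm1}) as $\chi_{v}^{(\pi,E)}$, i.e.\ "at the top" $D^{\pi}d^{E}\otimes v$, whereas in the mixed product the copy of $V$ sits "at the bottom" $1\otimes V=x^{(0)}\xi^{\emptyset}\otimes V$; the derivation action of $L_{0}$ on $\mathcal{O}(k,l,\underline{m})$ scales $x^{(\pi)}\xi^{E}$ by precisely $\sigma(X)=\mathrm{str}(\rho(X))$ relative to how it scales $1$, so the two $L_{0}$-structures differ exactly by $\sigma$, and $\Theta$ (which matches $x^{(\alpha)}\xi^{u}$ with $D^{\alpha}d^{u}$, hence the bottom of one side with the bottom of the other) converts one into the other. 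Making this precise — essentially the identity $\mathrm{pr}_{0}$ vs.\ $\mathrm{pr}_{\text{top}}$ duality together with the computation in \eqref{3} of Theorem~\ref{th2} — is the crux. Once it is established for generators, the derivation property propagates the intertwining to all of $L(k,l,\underline{m})$, and since $\Theta$ is a bijection, it is the desired isomorphism of $L(k,l,\underline{m})$-modules, completing the proof.
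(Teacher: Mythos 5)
Your explicit map $\Theta(D^{\alpha}d^{u}\otimes v)=\pm\,x^{(\alpha)}\xi^{u}\otimes v$ cannot be an $L$-homomorphism, and the generator check you sketch for $D_{i}$ does not close. In $\mathrm{Ind}_{K}(V_{\sigma})$ left multiplication by $D_{i}$ \emph{raises} the exponent, $D_{i}\cdot(D^{\alpha}d^{u}\otimes v)=D^{\alpha+\varepsilon_{i}}d^{u}\otimes v$, whereas in the mixed product $D_{i}$ \emph{lowers} it, $D_{i}\cdot(x^{(\alpha)}\xi^{u}\otimes v)=x^{(\alpha-\varepsilon_{i})}\xi^{u}\otimes v$; the claim that $\alpha+\varepsilon_{i}$ on one side ``matches'' $\alpha-\varepsilon_{i}$ on the other after divided-power bookkeeping is simply false. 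Concretely: in the mixed product $D_{i}$ annihilates $1\otimes v$ (this is the transitivity you yourself invoke), while in the induced module $D_{i}\cdot(1\otimes v)=D_{i}\otimes v\neq 0$; the subspace killed by all of $L^{-}$ in $\mathrm{Ind}_{K}(V_{\sigma})$ is $D^{\pi}d^{E}\otimes V$ (because $D_{i}^{p^{m_{i}}}=z_{i}$ and $d_{j}^{2}=0$ act as zero), not $1\otimes V$. Hence any equivariant bijection must reverse indices, $D^{\alpha}d^{u}\otimes v\mapsto \pm\,x^{(\pi-\alpha)}\xi^{E\setminus u}\otimes(\cdots)$ --- which is exactly what your own closing discussion of ``top'' versus ``bottom'' suggests, but it contradicts the map you actually wrote down (which matches bottom with bottom). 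No choice of Koszul signs repairs this, since corresponding basis vectors have different annihilators on the two sides.

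Second, even after correcting the orientation of $\Theta$, the genuinely hard part --- $L_{0}$- and $L_{1}$-equivariance, i.e.\ showing that the $\sigma$-twist on $V$ accounts exactly for the supertrace scaling that the derivation action produces on $x^{(\pi)}\xi^{E}$ --- is only announced as ``the crux'' and never carried out, so what you have is a plan rather than a proof. Note that the paper's argument avoids this computation entirely and is much softer: one checks that $\mathcal{O}(k,l,\underline{m})\otimes V$ is positively graded, transitive, and annihilated by the $z_{i}$, applies Proposition \ref{th3} to obtain an injective $L$-homomorphism into $\mathrm{Coind}_{K}(V)$, observes that both spaces have dimension $2^{l}p^{m_{1}+\cdots+m_{k}}\dim V$ so the embedding is bijective, and then invokes Lemma \ref{lm1} to replace $\mathrm{Coind}_{K}(V)$ by $\mathrm{Ind}_{K}(V_{\sigma})$. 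If you want an explicit-map proof you would in effect have to reprove the content of Proposition \ref{th3} for the corrected $\Theta$; otherwise you should adopt the embedding-plus-dimension-count route.
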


\begin{proof}
Since $(\mathcal{O}(k,l,\underline{m})\otimes V)_{k}:=\langle a\otimes v\mid a\in \mathcal{O}(k,l,\underline{m})_{k}, v\in V\rangle$,
the mixed product is a positively graded module.
According to the definition of the mixed product (see \cite{zhang1}), we have
\begin{eqnarray*}
D_{i}(x^{(\alpha)}\xi^{u}\otimes
v)&=&x^{(\alpha-\varepsilon_{i})}\xi^{u}\otimes v,\quad
i\in\mathrm{Y}_{0},\\
d_{j}(x^{(\alpha)}\xi^{u}\otimes
v)&=&(-1)^{u(j)}x^{(\alpha)}\xi^{u-\langle j\rangle}\otimes v,\quad
j\in\mathrm{Y}_{1}, \end{eqnarray*} where
$\alpha\in\mathbb{A}(k,\underline{m})$, $u\in \mathbb{B}$ and $v\in
V$. The first equality shows that
$z_{i}(\mathcal{O}(k,l,\underline{m})\otimes V)=0$, $1\leq i\leq k$.
The above equalities also ensure the transitivity of
$\mathcal{O}(k,l,\underline{m})\otimes V$. Proposition \ref{th3}
furnishes an embedding from $\mathcal{O}(k,l,\underline{m})\otimes
V$ into $\mathrm{Coind}_{K}(V)$. Since
\[\mathrm{dim}(\mathrm{Coind}_{K}(V))=\mathrm{dim}(\mathcal{O}(k,l,\underline{m})\otimes
V)=2^{l}p^{m_{1}+\cdots+m_{k}}\mathrm{dim}V,\] the mapping is
bijective. Then Lemma \ref{lm1} gives an isomorphism between
$\mathrm{Ind}_{K}(V_{\sigma})$ and
$\mathcal{O}(k,l,\underline{m})\otimes V$.
\end{proof}

\begin{Remark}
Let notations be as in Theorem \ref{th2} and \ref{c}. Then the
following statements are equivalent.
\begin{enumerate}
\item[(1)] There exists a nondegenerate super-symmetric or
skew super-symmetric invariant bilinear form on the mixed product
$\mathcal{O}(k,l,\underline{m})\otimes V$.
\item[(2)] There exists an
isomorphism of $L(k,l,\underline{m})_{0}$-modules
$\zeta:V\rightarrow (V_{\sigma})^{*}$ such that
$\zeta(v)(w)=(-1)^{d(v)d(w)}\zeta(w)(v)$ or
$\zeta(v)(w)=-(-1)^{d(v)d(w)}\zeta(w)(v)$, for all $v,w\in V$.
\end{enumerate}
\end{Remark}

\section*{Acknowledgments}
This work was supported by the NNSF of China (Grant No.11171055),
Natural Science Foundation of Jilin province (No. 20130101068) and
the Fundamental Research Funds for the Central Universities
(No.12SSXT139). The authors thank professors Liangyun Chen, Baolin
Guan, Li Ren for their helpful comments and suggestions.

\bibliographystyle{model1a-num-names}

\end{document}